%% file: main-DNNmatching.tex
\documentclass[a4paper]{article}

\usepackage{graphicx}
\usepackage{amsmath,amssymb,amsfonts,amsthm,bm,mathtools}
\mathtoolsset{showonlyrefs=true,showmanualtags=true}
\usepackage{enumitem}
\usepackage{booktabs}
\usepackage[T1]{fontenc}
\usepackage{lmodern}
\usepackage[numbers,sort&compress]{natbib}
\usepackage{url}
\usepackage[unicode]{hyperref}
\hypersetup{hidelinks}

\newtheorem{theorem}{Theorem}[section]
\newtheorem{lemma}[theorem]{Lemma}
\newtheorem{proposition}[theorem]{Proposition}
\newtheorem{corollary}[theorem]{Corollary}
\theoremstyle{definition}
\newtheorem{definition}[theorem]{Definition}
\theoremstyle{remark}
\newtheorem{remark}[theorem]{Remark}

\newcommand{\R}{\mathbb{R}}
\newcommand{\supp}{\operatorname{supp}}
\newcommand{\relu}{\operatorname{ReLU}}
\newcommand{\ideal}{\mathcal{I}}
\newcommand{\qmod}{\mathcal{M}}
\newcommand{\Uset}{\mathcal{J}_{\mathrm{u}}}

\numberwithin{equation}{section}

\begin{document}

\begin{center}
\large{\bf Order-$2$ Tightness of Block-Sparse SOS Relaxations\\
for One-Layer ReLU Network Verification\\
with a Matching Input-Sharing Graph}
\end{center}\vspace{5mm}

\begingroup
\renewcommand{\thefootnote}{\fnsymbol{footnote}}
\begin{center}
\normalsize
Godai Azuma
\quad
Sunyoung Kim
\quad
Makoto Yamashita\footnotemark[1]
\end{center}
\footnotetext[1]{Corresponding author.
The research of Godai Azuma was supported by JSPS KAKENHI Grant Number
JP24K20738. The research of Makoto Yamashita was partially supported by
JSPS KAKENHI Grant Number 24K14836 and 26K02868.
}
\endgroup
\setcounter{footnote}{0}

\vspace{2mm}

{\footnotesize
\noindent\begin{minipage}{14cm}
{\bf Abstract:}
Azuma, Kim, and Yamashita formulated the verification problem for one-layer ReLU networks as a quadratically constrained quadratic program and
 established tight semidefinite relaxations for the edgeless case and for one-unit settings.
In this work, we represent the sharing pattern of undecided ReLUs over a box input set through an input-sharing graph
and focus on the case where this graph is a matching. We then derive an explicit, checkable sufficient condition for
the tightness of the order-$2$ block-sparse SOS relaxation associated with the connected-component decomposition of this graph.
 Under the matching assumption,
 the global problem decomposes into isolated-vertex blocks and single-edge blocks.
 The key difficulty, which is absent from the edgeless case, is establishing tightness for a two-unit edge block.
 For regular rank-one edges, we show that the convex hull of each two-unit local set can be described exactly
 by two reduced one-unit hulls coupled through a common shared scalar. Combining the one-unit tightness
 result of Azuma et al. with Farkas' lemma and affine elimination, we obtain a local order-$2$ certificate for each edge block.
 Isolated-vertex blocks reduce to one-unit problems over box input sets and are therefore handled at the same order.
 We prove that, when the input-sharing graph is a matching and every edge satisfies the regular rank-one condition,
 the order-$2$ block-sparse SOS relaxation is tight. This extends the tight sparse relaxation result for the edgeless
 case to the first sparse setting with a nontrivial two-unit interaction.
\end{minipage}
\\[5mm]

\noindent{\bf Keywords:} ReLU network verification; sums-of-squares relaxation;
semidefinite programming; sparse polynomial optimization; input-sharing graph.\\
\noindent{\bf Mathematics Subject Classification:} 90C22; 90C26; 90C90; 68T07.

\hbox to14cm{\hrulefill}\par
}
\vspace{5mm}

\input{section-01}
\input{section-02}
\input{section-03}
\input{section-04}
\input{section-05}

\input{section-06}

\section*{Acknowledgements}
We used AI-based language tools to improve the wording, readability, and presentation of some parts of the manuscript.
We also used such tools at an early stage of writing to help generate possible examples.
All mathematical statements, proofs, formulations, and numerical results were checked independently by
the authors, without relying on AI-generated claims. The authors take full responsibility for the accuracy,
validity, and integrity of the contents of this paper.

\bibliographystyle{abbrvnat}
\bibliography{references-DNNmatching}

\appendix
\input{section-0a}

\clearpage

\bigskip

\noindent{\sc Godai Azuma}\\
Department of Mathematical and Computing Science,
Institute of Science Tokyo, 2-12-1-W8-29 Oh-Okayama, Meguro-ku,
Tokyo 152-8550, Japan\\
Department of Industrial and Systems Engineering,
Aoyama Gakuin University, 5-10-1 Fuchinobe, Chuo-ku, Sagamihara-shi,
Kanagawa 252-5258, Japan\\
E-mail address: {\tt azuma@comp.isct.ac.jp}

\bigskip

\noindent{\sc Sunyoung Kim}\\
Department of Mathematics, Ewha W. University, 52 Ewhayeodae-gil,
Seodaemun-gu, Seoul 03760, Korea\\
E-mail address: {\tt skim@ewha.ac.kr}

\bigskip

\noindent{\sc Makoto Yamashita}\\
Department of Mathematical and Computing Science,
Institute of Science Tokyo, 2-12-1-W8-29 Oh-Okayama, Meguro-ku,
Tokyo 152-8550, Japan\\
E-mail address: {\tt Makoto.Yamashita@comp.isct.ac.jp}

\end{document}

%% file: section-01.tex
\section{Introduction}\label{sec:intro}

Using neural networks in safety-critical applications requires rigorous bounds on their outputs,
or on associated specification functions, over given input sets.
Such bounds are essential for certifying, for instance, that admissible input perturbations
do not lead to misclassification, or that a system trajectory remains within a designated safe region.
Numerous verification methods have been proposed, including complete approaches based on
satisfiability modulo theories (SMT) and Boolean satisfiability (SAT), abstract interpretation,
branch-and-bound methods, mixed-integer optimization, semidefinite relaxations, and hybrid methods
\citep{KatzBarrettDillJulianKochenderfer2017,Ehlers2017,GehrMirmanDrachslerCohenTsankovChaudhuriVechev2018,BunelTurkaslanTorrKohliMudigonda2018,TjengXiaoTedrake2019,BunelLuTurkaslanTorrKohliKumar2020,FazlyabMorariPappas2022}.
For surveys, see \citet{LiuArnonLazarusStrongBarrettKochenderfer2021,MengBaiTeoHouXiaoLinDong2022};
for a tutorial perspective in control, see \citet{Everett2021}. A basic theoretical question is how accurately
a tractable relaxation approximates the original verification problem and which structural conditions
ensure tightness. 

In this work, we consider the one-layer ReLU network
\begin{align*}
 v &= Wu+b,\\
 x &= \relu(v),\\
 y &= Ax+a
\end{align*}
where $u\in\R^{n_0}$ is the input, $v\in\R^{n_1}$ is the
pre-activation vector, $x\in\R^{n_1}$ is the hidden-layer output,
$y\in\R^{n_2}$ is the final output,
and $x = \relu(v)$ denotes
$x_j=\max\{v_j,0\}$ for $j=1,\dots,n_1$.
We consider the box input set
\[
\mathcal U:=\{u\in\R^{n_0}\mid \underline u\le u\le \bar u\}.
\]
We study the verification problem of maximizing a linear specification $c^\top y$ over this set:
\begin{equation}
\gamma^*:=\max\{c^\top y\mid u\in\mathcal U,\ v=Wu+b,\ x=\relu(v),\ y=Ax+a\}.
\label{eq:intro-verification-problem}
\end{equation}
This form is a basic model that can represent, with auxiliary variables if needed, many specifications such as componentwise output bounds and margin-type inequalities between outputs.

As each ReLU constraint can be represented by linear inequalities and one quadratic equality, problem \eqref{eq:intro-verification-problem} is a quadratically constrained quadratic program (QCQP) and hence a polynomial optimization problem. One can then apply semidefinite relaxations based on sums of squares (SOS) and moment techniques \citep{Lasserre2001,Putinar1993,Laurent2009}. Moreover, sparse SOS relaxations that exploit correlative or term sparsity \citep{WakiKimKojimaMuramatsu2006,WangMagronLasserre2021,WangMagronLasserreMai2022} are relevant to structured polynomial optimization. In a related direction, \citet{MarumoKimYamashita2024} study a T-SDP relaxation for constrained polynomial optimization and show that it can be transformed into a standard SDP relaxation with block-diagonal structure. In the setting of neural network verification, \citet{NewtonPapachristodoulou2023} apply sparse polynomial optimization and the Positivstellensatz to obtain tighter bounds for ReLU, sigmoid, and tanh networks.

Our goal is to identify graph-based conditions under which a sparse relaxation is tight for one-layer ReLU verification.
The starting point for our analysis is the work of Azuma et al.~\citep{AKY2026},
which established tight semidefinite relaxations for one-layer ReLU networks in three settings:
the one-unit case, the ellipsoidal-input case, and the box-input case with a diagonal weight matrix.
 When the input set is a box and the weight matrix is diagonal,
 each undecided unit depends on input variables that are not shared with any other undecided unit.
 The problem therefore decomposes unit by unit. The diagonal-weight setting is therefore a special case of an edgeless input-sharing graph.

We study the matching case, in which each undecided ReLU shares input variables with at most one other undecided ReLU. Under this assumption, every connected component is either an isolated vertex or a single edge. Existing one-unit results handle isolated vertices, whereas a single edge requires a new tightness argument for two units coupled through a shared scalar.

For a regular rank-one edge, we describe the convex hull of the two-unit block and derive an SOS certificate at relaxation order $2$. Together with the reduced-objective condition introduced later, these local certificates prove tightness of the block-sparse relaxation for matching input-sharing graphs. Isolated vertices are handled by the one-unit result of Azuma et al.~\cite{AKY2026}.

The significance of this result lies in replacing a dense global certificate with local
 certificates supported on the graph structure of the problem.
 Tightness is established componentwise, through one-unit blocks and two-unit edge blocks.
In this sense, the paper contributes not only to ReLU network
verification, but also to graph-structured nonconvex polynomial optimization more broadly.

The main contributions of the paper are as follows.
\begin{enumerate}[label=(\roman*),leftmargin=2.4em]
\item We introduce the input-sharing graph to describe coupling among undecided ReLUs and relate the diagonal-weight case of Azuma et al.~\cite{AKY2026} to the edgeless setting.
\item Under the matching assumption, we decompose the reduced verification problem into isolated-vertex and single-edge blocks and formulate the corresponding block-sparse SOS relaxation.
\item For a regular rank-one edge, we obtain an exact convex-hull description and derive a local certificate at relaxation order $2$.
\item By assembling the one-unit and two-unit certificates, we prove global tightness under the matching and reduced-objective conditions.
\end{enumerate}

The paper is organized as follows. Section~\ref{sec:prelim} reviews the semialgebraic formulation of one-layer ReLU verification, the basic machinery of SOS relaxations, and the one-unit results of \citet{AKY2026}. Section~\ref{sec:graph} introduces determined units, undecided ReLUs, the input-sharing graph, matching, and regular rank-one edges. Section~\ref{sec:reduction} shows that under the matching condition the problem decomposes into isolated-vertex components and single-edge components, and defines the order-$2$ block-sparse SOS relaxation used in this paper. Section~\ref{sec:edge} proves local tightness for a regular rank-one edge and then states the main theorem for the matching case. Section~\ref{sec:conclusion} concludes the paper. Appendix~\ref{app:worked-example} gives a small example that illustrates the input-sharing graph, the component decomposition, the regular rank-one condition, and the resulting tightness statement.

%% file: section-02.tex
\section{Problem Formulation and Sparse Polynomial Relaxations via SOS}\label{sec:prelim}

\subsection{Semialgebraic formulation of the verification problem and reduction to polynomial optimization}

We consider the one-layer ReLU network
\begin{align}
 v &= Wu+b, \label{eq:dnn-v}\\
 x &= \relu(v), \label{eq:dnn-x}\\
 y &= Ax+a \label{eq:dnn-y},
\end{align}
where $u\in\R^{n_0}$ is the input, $v,x\in\R^{n_1}$ are the pre-activation variables and ReLU outputs, respectively, and $y\in\R^{n_2}$ is the output. The data are $W\in\R^{n_1\times n_0}$, $b\in\R^{n_1}$, $A\in\R^{n_2\times n_1}$, and $a\in\R^{n_2}$. We assume that the input set is the box
\begin{equation}
\mathcal U:=\{u\in\R^{n_0}\mid \underline u\le u\le \bar u\}.
\label{eq:input-box}
\end{equation}
Our basic problem is the upper-bound evaluation problem for a linear specification vector $c\in\R^{n_2}$,
\begin{equation}
\gamma^*:=\max\{c^\top y \mid u\in\mathcal U,\ v=Wu+b,\ x=\relu(v),\ y=Ax+a\}.
\label{eq:verification-problem}
\end{equation}
This form represents not only the maximization of an output component but also, for example, componentwise output bounds and margin-type constraints defined by differences between competing classes.

For each unit $j=1,\dots,n_1$, the ReLU constraint $x_j=\max\{v_j,0\}$ is equivalent to
\begin{equation}
 x_j\ge 0,
 \qquad
 x_j-v_j\ge 0,
 \qquad
 x_j(x_j-v_j)=0.
\label{eq:relu-semi-algebraic}
\end{equation}
Hence problem \eqref{eq:verification-problem} can be written as a semialgebraic optimization problem consisting of affine relations, linear inequalities, and quadratic equalities. Let us collect all variables as
\[
 z:=(u,v,x,y).
\]
We write the inequality constraints as $g_1(z),\dots,g_m(z)\ge 0$ and the equality constraints as $h_1(z)=\cdots=h_s(z)=0$. Then the feasible set is
\begin{equation}
K:=\{z\in\R^{n_0+2n_1+n_2}\mid g_\ell(z)\ge 0\ (\ell=1,\dots,m),\ h_r(z)=0\ (r=1,\dots,s)\}
\label{eq:feasible-set-K}
\end{equation}
and the objective function $c^\top y$ is a linear polynomial in $z$.

Giving a valid upper bound $\gamma$ for the verification problem is equivalent to proving that the polynomial
\begin{equation}
p_\gamma(z):=\gamma-c^\top y
\label{eq:pgamma}
\end{equation}
is nonnegative on $K$. Therefore, the verification problem reduces to the question of how to certify the nonnegativity of a polynomial on a semialgebraic set by a computationally tractable certificate. This is precisely the starting point for applying Lasserre-type moment-SOS hierarchies and Putinar-type Positivstellensätze \citep{Putinar1993,Lasserre2001}. Polynomial and SOS-based approaches to neural network verification are studied, for example, by \citet{NewtonPapachristodoulou2023}.

\subsection{SOS certificates, degree-bounded relaxations, and problem-specific sparse certificates}

A polynomial $q(z)$ is said to be SOS if there exist finitely many polynomials $r_1,\dots,r_N$ such that
\[
q(z)=\sum_{i=1}^N r_i(z)^2.
\]
Since every SOS polynomial is globally nonnegative and SOS feasibility admits a Gram matrix representation,
SOS relaxations can be formulated and solved as semidefinite programs
\citep{Lasserre2001,WakiKimKojimaMuramatsu2006}.
Their dual counterparts are moment relaxations, expressed in terms of moment and localizing matrices.

For the inequality constraints, define the quadratic module
\begin{align}
\qmod(g)&:=\Bigl\{\sigma_0+\sum_{\ell=1}^m \sigma_\ell g_\ell\ \Big|\ \sigma_0,\sigma_1,\dots,\sigma_m\text{ are SOS polynomials}\Bigr\},
\end{align}
and for the equality constraints, define the ideal
\begin{align}
\ideal(h)&:=\Bigl\{\sum_{r=1}^s \tau_r h_r\ \Big|\ \tau_1,\dots,\tau_s\text{ are polynomials}\Bigr\}.
\end{align}
If
\begin{equation}
p_\gamma\in \qmod(g)+\ideal(h),
\label{eq:full-sos-certificate}
\end{equation}
then $p_\gamma(z)\ge 0$ for every $z\in K$, so $\gamma$ is a valid upper bound for problem \eqref{eq:verification-problem}. In this paper, we call an expression of the form \eqref{eq:full-sos-certificate} a certificate for $p_\gamma$.

For computational purposes, we use degree-bounded truncations. Given a relaxation order
$\omega\in\mathbb N$, define
\begin{align}
\qmod_\omega(g)
&:=\Bigl\{\sigma_0+\sum_{\ell=1}^m \sigma_\ell g_\ell\in\qmod(g)
\ \Big|\ \deg(\sigma_0)\le 2\omega,\ \deg(\sigma_\ell g_\ell)\le 2\omega\ (\ell=1,\dots,m)\Bigr\},
\label{eq:truncated-qmodule}\\
\ideal_\omega(h)
&:=\Bigl\{\sum_{r=1}^s \tau_r h_r\in\ideal(h)
\ \Big|\ \deg(\tau_r h_r)\le 2\omega\ (r=1,\dots,s)\Bigr\}
\label{eq:truncated-ideal}
\end{align}
and let
\begin{equation}
\gamma^{(\omega)}:=\inf\{\gamma\in\R\mid p_\gamma\in \qmod_\omega(g)+\ideal_\omega(h)\}.
\label{eq:order-omega-bound}
\end{equation}
Then $\gamma^*\le \gamma^{(\omega)}$ always holds. Moreover, if the feasible set is compact and the quadratic module is Archimedean, Putinar's Positivstellensatz guarantees that $\gamma^{(\omega)}\downarrow\gamma^*$ as $\omega\to\infty$ \citep{Putinar1993,Lasserre2001}.
The number $\omega$ is called the relaxation order.
As $\omega$ increases, higher-degree polynomials are allowed in the SOS multipliers $\sigma_0$, $\sigma_\ell$, and
in the polynomial multipliers $\tau_r$. Consequently, the number of unknown coefficients grows rapidly,
and the resulting certificate search problem becomes increasingly large.
The central question of this paper is to identify conditions under which this bound is already tight at a low relaxation order, in particular at
$\omega=2$.

The relaxation used in the remainder of the paper is not the generic dense SOS relaxation itself but a block-sparse SOS certificate class tailored to the graph-structured sparsity of the ReLU verification problem. This perspective is consistent with the sparse-SOS literature exploiting correlative or term sparsity \citep{WakiKimKojimaMuramatsu2006,WangMagronLasserre2021,WangMagronLasserreMai2022},
but our aim is not to develop a general hierarchy. Instead, in Sections~\ref{sec:graph} and \ref{sec:reduction}
we define a problem-specific certificate class based on the input-sharing graph and its local blocks, and then prove its tightness under
the matching assumption. Thus, the sparse relaxation studied here is a problem-specific block-sparse SOS relaxation
designed for the local geometry of one-layer ReLU verification.

\subsection{The one-unit tightness result as a fundamental component of the analysis}

Within SDP-based neural network verification,
DeepSDP is a representative example \citep{FazlyabMorariPappas2022}, and \citet{Zhang2020} provides a geometric analysis of tightness
for such relaxations. 
The present paper is motivated by the work of \citet{AKY2026},
who proved tightness of semidefinite relaxations for one-layer ReLU networks in three cases:
a single neuron, an ellipsoidal input set, and a diagonal weight matrix over a box input set.
For our analysis, we use the following one-unit consequences of the tightness results of Azuma et al.~\cite{AKY2026}.

Here, a local problem means the problem associated with one connected component,
while a unit means a single ReLU, say $x_j=\relu(v_j)$. Thus, a one-unit block may still have a multidimensional input vector.
\begin{enumerate}[label=\textbf{(A\arabic*)},leftmargin=24mm]
\item \textbf{one-unit interval-input case.}
The local problem contains a single unit, and the effective input reduces to a scalar $s$ over an interval
\[
U=[\underline s,\overline s]\subset \R.
\]
In our notation, the input data consist of a $1\times 1$ matrix $W\in\R^{1\times 1}$, a scalar $b\in\R$,
and the interval input set $U$, and the feasible set has the form
\[
v=Ws+b,\qquad x=\relu(v),\qquad s\in U.
\]

\item \textbf{ellipsoidal-input case.}
The local problem involves a single unit, and the input set is an ellipsoid
\[
U=\{u\in\R^m\mid (u-u^{\mathrm c})^\top Q^{-1}(u-u^{\mathrm c})\le 1\}
\]
with $Q\succ 0$, while the pre-activation is given by the affine function
\[
v=w^\top u+b.
\]

\item \textbf{one-unit rectangular-input case.}
The local problem contains a single unit, and the input set is a rectangle
\[
U=[\underline u_1,\overline u_1]\times\cdots\times[\underline u_m,\overline u_m]\subset\R^m.
\]
The pre-activation is affine in the input:
\[
v=w^\top u+b,
\]
This is the form to which isolated-vertex blocks and the reduced one-unit problems introduced later
are reduced. The interval-input case \textbf{(A1)} is the special case $m=1$ of the rectangular-input setting.
\end{enumerate}

These results will be applied to isolated vertices and to the reduced one-unit problems arising from an edge block. The new difficulty is to connect the resulting one-unit certificates across the shared scalar.

%% file: section-03.tex
\section{Preprocessing, Undecided ReLUs, and the Input-Sharing Graph}\label{sec:graph}

In this section, we introduce the basic notation for the block-sparse SOS relaxation defined in Section~\ref{sec:reduction}, reflecting the structure of the one-layer ReLU verification problem studied in this paper.
 We first remove the determined ReLUs by interval evaluation and derive a reduced problem containing only
 undecided ReLUs. We then define the graph $H=(\Uset,E)$ that records input sharing among undecided ReLUs, and introduce the matching condition and the
 shared rank $q_e$. Finally, we define regular rank-one edges, which will be used in Section~\ref{sec:edge}, and clarify what decomposes componentwise
 under the matching assumption.

\subsection{Elimination of determined ReLUs and the reduced verification problem}

Assume that for each hidden unit $j=1,\dots,n_1$, preprocessing provides an interval bound
\begin{equation}
[\underline v_j,\overline v_j]
\end{equation}
for the pre-activation $v_j$. We then partition the unit indices as
\begin{align}
\mathcal J^- &:= \{j\in\{1,\dots,n_1\}\mid \overline v_j\le 0\},\\
\mathcal J^+ &:= \{j\in\{1,\dots,n_1\}\mid \underline v_j\ge 0\},\\
\Uset &:= \{j\in\{1,\dots,n_1\}\mid \underline v_j<0<\overline v_j\}.
\label{eq:stable-unstable-partition-sec3}
\end{align}
Here $\mathcal J^-\cup\mathcal J^+$ is the set of determined ReLUs and $\Uset$ is the set of undecided ReLUs. In the verification literature, the same partition is often described as stable versus unstable ReLUs. In this paper, however, we consistently use the terminology ``determined ReLUs'' and
``undecided ReLUs.'' For $j\in\mathcal J^-$
the ReLU output is fixed at 
$x_j=0$, whereas for $j\in\mathcal J^+$, it is given by $x_j=v_j$. Thus, all essential nonlinearity is confined to $\Uset$.

Let $W_{\Uset,:}$ denote the submatrix of $W$ consisting of the rows indexed by $\Uset$, and let $A_{:,\Uset}$ denote the submatrix of $A$ consisting of the columns indexed by $\Uset$. Define
\begin{align}
\bar W &:= W_{\Uset,:},
&
\bar b &:= b_{\Uset},\\
\bar A &:= A_{:,\Uset},
&
\bar B &:= A_{:,\mathcal J^+}W_{\mathcal J^+,:},
&
\bar a &:= a + A_{:,\mathcal J^+}b_{\mathcal J^+}.
\end{align}
Then, the verification problem \eqref{eq:verification-problem} can be rewritten equivalently as
\begin{equation}
\gamma^*=
\max\Bigl\{
 c^\top(\bar A x+\bar B u+\bar a)
 \ \Big|\
 u\in\mathcal U,
 \ v=\bar W u+\bar b,
 \ x=\relu(v)
\Bigr\}.
\label{eq:reduced-verification-problem-sec3}
\end{equation}
Here the vectors $v$ and $x$ are indexed only by $\Uset$. Therefore, in the subsequent discussion,
 the essential semialgebraic constraints are simply
\begin{equation}
 v_j = \bar W_{j:}u+\bar b_j,
 \qquad
 x_j\ge 0,
 \qquad
 x_j-v_j\ge 0,
 \qquad
 x_j(x_j-v_j)=0
 \qquad (j\in\Uset).
\label{eq:reduced-semialgebraic-system-sec3}
\end{equation}

Using the notation of Section~\ref{sec:prelim}, let
\begin{equation}
 p_\gamma(u,v,x):=\gamma-c^\top(\bar A x+\bar B u+\bar a).
\label{eq:reduced-pgamma-sec3}
\end{equation}
Our aim is to understand when the nonnegativity of this $p_\gamma$ can be captured tightly by SOS certificates
compatible with a sparse block structure.
This requires an explicit description of the input coordinates shared by the undecided ReLUs in $\Uset$.

\subsection{Input supports, input-sharing graph, matching, and shared rank}

For each undecided ReLU $j\in\Uset$, define the set of input coordinates on which its pre-activation depends by
\begin{equation}
S_j:=\supp(\bar W_{j:})
=\{k\in\{1,\dots,n_0\}\mid \bar W_{jk}\neq 0\}.
\label{eq:input-support-sec3}
\end{equation}
Thus, the ReLU constraint for unit $j$ depends not on the entire input vector $u$ but only on the subvector $u_{S_j}$.

Based on these input supports, we introduce an undirected graph describing the interaction among undecided ReLUs.

\begin{definition}[input-sharing graph]
For distinct vertices $i,j\in\Uset$, define
\begin{equation}
\{i,j\}\in E
\quad\Longleftrightarrow\quad
S_i\cap S_j\neq\emptyset.
\label{eq:edge-definition-sec3}
\end{equation}
Then, $H=(\Uset,E)$ is called the input-sharing graph of the undecided ReLUs.
\end{definition}

Condition \eqref{eq:edge-definition-sec3} indicates whether two undecided ReLUs are coupled through common input coordinates.
Therefore, the graph $H$ identifies where the essential interactions appear in a global block-sparse SOS certificate.
In particular, when $H$ is edgeless, no undecided ReLU shares inputs with any other undecided ReLU.

For an edge $e=\{i,j\}\in E$, we write the shared input set as
\begin{equation}
S_e:=S_i\cap S_j.
\label{eq:shared-index-set-sec3}
\end{equation}
We also impose the matching condition as the main graph-theoretic assumption.

\begin{definition}[matching]
The input-sharing graph $H=(\Uset,E)$ is called a \emph{matching} if
\begin{equation}
\deg_H(j)\le 1
\qquad (\forall j\in\Uset)
\label{eq:matching-definition-sec3}
\end{equation}
holds.
\end{definition}
Here, $\deg_H(j)$ is the number of other undecided ReLUs that share at least one input coordinate with unit $j$:
\[
\deg_H(j)=\left|\{i\in\Uset\setminus\{j\}\mid S_i\cap S_j\neq\emptyset\}\right|.
\]
Thus, the matching condition means that each undecided ReLU shares input variables with at most one other undecided ReLU.

Under the matching assumption, every connected component is either an isolated vertex or a single edge.
This allows compatibility and assembly to be treated component by component, rather than over the entire graph at once. Accordingly, the matching assumption is a central hypothesis in the results of Sections~4 and~5.
%

To measure the effective dimension of the shared part of each edge, we introduce the shared rank.

\begin{definition}[shared rank]
For an edge $e=\{i,j\}\in E$, define
\begin{equation}
q_e
:=
\dim \operatorname{span}\{\bar W_{i,S_e}^{\top},\bar W_{j,S_e}^{\top}\}.
\label{eq:qe-definition-sec3}
\end{equation}
We call $q_e$ the shared rank of $e$.
\end{definition}

Here $\bar W_{i,S_e}$ and $\bar W_{j,S_e}$ are the coefficient vectors obtained by restricting rows $i$ and $j$ to the shared input set $S_e$. Thus $q_e$ measures the number of effective directions through which the two undecided units depend affinely on the shared inputs. In particular, when
\begin{equation}
q_e\le 1,
\label{eq:qe-le-one-sec3}
\end{equation}
the reduction leaves only a single common scalar to represent the shared part.
This condition is the starting point for the order-$2$ local tightness result for two-unit blocks in Section~\ref{sec:edge}.

\subsection{Regular rank-one edges and the componentwise decomposition under matching}

For a subset $I\subseteq\{1,\dots,n_0\}$, write the projection of the input box onto that coordinate subspace as
\begin{equation}
\mathcal U_I:=\{u_I\in\R^I\mid \underline u_I\le u_I\le \bar u_I\}.
\label{eq:projected-input-box-sec3}
\end{equation}
Fix an edge $e=\{i,j\}\in E$. For each $h\in\{i,j\}$, define the private index set by
\begin{equation}
P_h:=S_h\setminus S_e.
\label{eq:private-index-set-sec3}
\end{equation}
Then the pre-activation decomposes as
\begin{equation}
v_h=\bar W_{h,P_h}u_{P_h}+\bar W_{h,S_e}u_{S_e}+\bar b_h
\qquad (h=i,j).
\label{eq:edgewise-affine-split-sec3}
\end{equation}

Based on this decomposition, define for each $h\in\{i,j\}$ the bounds of the private and shared parts by
\begin{align}
\underline p_{h,e}
&:=
\min_{u_{P_h}\in\mathcal U_{P_h}}
\bigl(\bar W_{h,P_h}u_{P_h}+\bar b_h\bigr),
&
\overline p_{h,e}
&:=
\max_{u_{P_h}\in\mathcal U_{P_h}}
\bigl(\bar W_{h,P_h}u_{P_h}+\bar b_h\bigr),
\label{eq:plan-private-bounds-sec3}\\
\underline s_{h,e}
&:=
\min_{u_{S_e}\in\mathcal U_{S_e}}
\bar W_{h,S_e}u_{S_e},
&
\overline s_{h,e}
&:=
\max_{u_{S_e}\in\mathcal U_{S_e}}
\bar W_{h,S_e}u_{S_e}.
\label{eq:plan-shared-bounds-sec3}
\end{align}

\begin{definition}[regular rank-one edge]
An edge $e=\{i,j\}\in E$ is called a \emph{regular rank-one edge} if the following two conditions hold:
\begin{enumerate}[label=\rmfamily(M3\alph*),leftmargin=28pt]
\item \label{item:M3a-sec3}
\textbf{rank-one shared direction:}
\begin{equation}
q_e\le 1.
\label{eq:M3a-sec3}
\end{equation}

\item \label{item:M3b-sec3}
\textbf{edge regularity:}
\begin{equation}
\underline p_{h,e}+\overline s_{h,e}<0<\overline p_{h,e}+\underline s_{h,e}
\qquad (h=i,j).
\label{eq:M3b-sec3}
\end{equation}
\end{enumerate}
\end{definition}

Condition \eqref{eq:M3a-sec3} ensures
 that the shared part is represented by a single scalar.   
 Condition \eqref{eq:M3b-sec3} states
  that, 
  regardless of how the shared variable
  $u_{S_e}$ is fixed within its admissible range,
  the private variables can still make the pre-activation of each unit take both negative and positive values.
  Thus a regular rank-one edge is a local condition expressing that the shared structure is effectively
  one-dimensional and that both units remain undecided throughout the entire edge.

\begin{proposition}[operational meaning of edge regularity]
\label{prop:sec3-edge-regularity-operational}
Fix an edge $e=\{i,j\}\in E$ and $h\in\{i,j\}$.
Then the following statements are equivalent:
\begin{enumerate}[label=\rmfamily(\alph*),leftmargin=28pt]
\item $\underline p_{h,e}+\overline s_{h,e}<0<\overline p_{h,e}+\underline s_{h,e}$ holds;
\item for every $u_{S_e}\in\mathcal U_{S_e}$, there exist $u_{P_h}^-,u_{P_h}^+\in\mathcal U_{P_h}$ such that
    \begin{equation*}
    \bar W_{h,P_h}u_{P_h}^-+\bar W_{h,S_e}u_{S_e}+\bar b_h<0,
    \qquad
    \bar W_{h,P_h}u_{P_h}^++\bar W_{h,S_e}u_{S_e}+\bar b_h>0.
    \end{equation*}
    In other words, even after the shared part is fixed, unit $h$ can still attain both negative and positive pre-activation values by varying its private part.
\end{enumerate}
\end{proposition}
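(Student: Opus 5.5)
The plan is to exploit the fact that the pre-activation splits additively into a private part and a shared part, and that these depend on disjoint coordinate sets of a box. For fixed $u_{S_e}\in\mathcal U_{S_e}$, write $\sigma:=\bar W_{h,S_e}u_{S_e}$. Then $\sigma\in[\underline s_{h,e},\overline s_{h,e}]$ by definition of these bounds. Because $P_h$ and $S_e$ are disjoint, the admissible set of $u_{P_h}$ is the whole box $\mathcal U_{P_h}$ independently of $u_{S_e}$. Hence, as $u_{P_h}$ ranges over $\mathcal U_{P_h}$, the value $v_h=\bar W_{h,P_h}u_{P_h}+\sigma+\bar b_h$ ranges exactly over the interval $[\underline p_{h,e}+\sigma,\ \overline p_{h,e}+\sigma]$. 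Here the extremes are attained since the box is compact, and every intermediate value is attained since the box is connected and the map is affine.

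For (a)$\Rightarrow$(b), fix $u_{S_e}$ and the corresponding $\sigma$. Take $u_{P_h}^-$ to be a minimizer in \eqref{eq:plan-private-bounds-sec3}. Then
\[
v_h=\underline p_{h,e}+\sigma\le \underline p_{h,e}+\overline s_{h,e}<0 .
\]
Similarly, take $u_{P_h}^+$ to be a maximizer, which gives
\[
v_h=\overline p_{h,e}+\sigma\ge \overline p_{h,e}+\underline s_{h,e}>0 .
\]

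For (b)$\Rightarrow$(a), choose $u_{S_e}$ attaining $\overline s_{h,e}$; this is possible because $\mathcal U_{S_e}$ is a compact box. Condition (b) gives some $u_{P_h}^-$ with
\[
\bar W_{h,P_h}u_{P_h}^-+\bar b_h+\overline s_{h,e}<0 ,
\]
and therefore $\underline p_{h,e}+\overline s_{h,e}<0$. Symmetrically, choose $u_{S_e}$ attaining $\underline s_{h,e}$ and use $u_{P_h}^+$ to obtain $\overline p_{h,e}+\underline s_{h,e}>0$.

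There is no genuine obstacle in this argument. The only points needing care are the following.
\begin{enumerate}
\item The minima and maxima in \eqref{eq:plan-private-bounds-sec3}--\eqref{eq:plan-shared-bounds-sec3} are attained, which holds because we minimize linear functions over compact boxes.
\item The degenerate case $P_h=\emptyset$ must be handled. The private part is then the constant $\bar b_h$, so $\underline p_{h,e}=\overline p_{h,e}=\bar b_h$. In this case (a) would force $\overline s_{h,e}<\underline s_{h,e}$, which is impossible, so (a) is false. Condition (b) is also false, since it would require $\bar b_h+\sigma$ to be both negative and positive for the same $\sigma$. Hence the equivalence still holds.
\end{enumerate}
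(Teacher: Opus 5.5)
Your proof is correct and follows the same route as the paper's. Fix the shared term $\sigma=\bar W_{h,S_e}u_{S_e}\in[\underline s_{h,e},\overline s_{h,e}]$, observe that the private term independently sweeps $[\underline p_{h,e},\overline p_{h,e}]$, and reduce uniformity over $u_{S_e}$ to the worst cases $\sigma=\overline s_{h,e}$ and $\sigma=\underline s_{h,e}$; you simply spell out both implications and the attainment of the extrema more explicitly, and your separate check of $P_h=\emptyset$ (where (a) can never hold) is correct, though the general argument already covers it.
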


\begin{proof}
Fix $u_{S_e}\in\mathcal U_{S_e}$. Then, the shared term $\bar W_{h,S_e}u_{S_e}$ is a point in the interval $[\underline s_{h,e},\overline s_{h,e}]$.
On the other hand, as $u_{P_h}$ varies over $\mathcal U_{P_h}$, the private term $\bar W_{h,P_h}u_{P_h}+\bar b_h$ ranges over the entire
interval $[\underline p_{h,e},\overline p_{h,e}]$. Hence, for the fixed shared term, the condition that the pre-activation $v_h$ can attain both negative and positive values is equivalent to
\begin{equation*}
\underline p_{h,e}+\bar W_{h,S_e}u_{S_e}<0<\overline p_{h,e}+\bar W_{h,S_e}u_{S_e}.
\end{equation*}
This requirement must hold uniformly for all
$u_{S_e}\in\mathcal U_{S_e}$, and
equivalently, it is characterized by the following worst-case inequality.
\begin{equation*}
\underline p_{h,e}+\overline s_{h,e}<0<\overline p_{h,e}+\underline s_{h,e}.
\end{equation*}
\end{proof}

Proposition~\ref{prop:sec3-edge-regularity-operational} provides
 a direct interpretation of the two requirements: condition \eqref{eq:M3a-sec3} makes the shared dependence one-dimensional, while condition \eqref{eq:M3b-sec3} ensures that each unit can attain both signs after the shared input is fixed. This excludes cases in which the shared coupling effectively determines one of the units.

Indeed, if $P_h=\emptyset$, so that no private variables are present, then condition \eqref{eq:M3b-sec3} requires the shared part itself to cross zero. Conversely, if $\bar W_{h,S_e}=0$, so that the shared part does not affect unit $h$, then \eqref{eq:M3a-sec3} is automatically satisfied, and the analysis reduces to a one-unit problem involving only the private variables.

Finally, to connect the above discussion with the block-sparse SOS certificates defined in Section~\ref{sec:reduction},
for each undecided ReLU $j\in\Uset$,
 we define the one-unit local variable tuple by
\begin{equation}
\zeta_j := (u_{S_j},v_j,x_j)
\label{eq:vertex-local-block-sec3}
\end{equation}
and for each edge $e=\{i,j\}\in E$ we define the two-unit local variable tuple by
\begin{equation}
\zeta_e := (u_{S_i\cup S_j},v_i,x_i,v_j,x_j).
\label{eq:edge-local-block-sec3}
\end{equation}
These local tuples serve as the blocks in the subsequent discussion.
Section~4 assembles the componentwise certificates, and Section~5 proves the edge-local exactness required for regular rank-one edges.

%% file: section-04.tex
\section{Component Decomposition under the Matching Assumption}\label{sec:reduction}

In this section, we study the case in which the input-sharing graph $H=(\Uset,E)$ introduced in Section~\ref{sec:graph} is a matching.
Under this assumption, all nonlinear coupling is confined to isolated-vertex components and single-edge components. This structure allows us to decompose both the feasible set and the objective, and to prove an assembly result that converts local order-$2$ certificates into a global certificate.

\subsection{Component decomposition induced by the matching assumption}

We first note that, under the matching assumption, every connected component has at most two vertices.

\begin{proposition}\label{prop:sec4-components-matching}
Assume that the input-sharing graph $H=(\Uset,E)$ is a matching. Then,
 every connected component of $H$ is either an isolated vertex or a single edge.
\end{proposition}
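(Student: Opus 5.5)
The plan is to argue directly from the degree bound \eqref{eq:matching-definition-sec3}, working on one connected component at a time. Let $C\subseteq\Uset$ be the vertex set of a connected component of $H$, and fix any vertex $j\in C$. Since $\deg_H(j)\le 1$, there are two cases: $\deg_H(j)=0$ or $\deg_H(j)=1$.

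If $\deg_H(j)=0$, then no edge is incident to $j$. Hence the only vertex reachable from $j$ is $j$ itself, so $C=\{j\}$ and the component is an isolated vertex. If $\deg_H(j)=1$, let $i$ be the unique neighbour of $j$, so that $\{i,j\}\in E$. Because edges are defined only for distinct vertices, $i\ne j$. Applying the matching condition to $i$, its single incident edge is $\{i,j\}$, so $i$ has no neighbour other than $j$.

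Next we show $C=\{i,j\}$. Suppose some vertex $k\in C\setminus\{i,j\}$ existed. Take a path from $j$ to $k$ inside $C$. Its first step must go to $i$, since that is the only neighbour of $j$. Its second step must then go to a neighbour of $i$ other than $j$, and none exists. This contradiction shows that $C=\{i,j\}$. The edges of $C$ lie among pairs of its vertices, so the component is exactly the single edge $\{i,j\}$.

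Every step here is elementary. The only care needed is in the path-tracing step, which uses the degree bound at \emph{both} endpoints of the edge, and in noting that $H$ has no loops, which guarantees $i\neq j$. I do not expect any real obstacle.
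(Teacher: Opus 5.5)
Your proof is correct and takes essentially the same approach as the paper. The paper notes that a component with three or more vertices would contain an intermediate vertex of degree at least $2$, and your path-tracing from $j$ through its unique neighbour $i$ spells out that same degree argument more explicitly.
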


\begin{proof}
By definition of a matching, each vertex $j\in\Uset$ is incident to at most one edge. Hence, if some connected component contained three or more vertices, an intermediate vertex in that component would have degree at least $2$, which is impossible. Therefore every connected component is either a singleton component of degree $0$ or a two-vertex, one-edge component.
\end{proof}

Proposition~\ref{prop:sec4-components-matching} implies that all remaining nonconvex coupling in the global problem is localized
within one-unit blocks and two-unit edge blocks. We next specify the input coordinates assigned to each block and use this structure
to decompose both the feasible set and the objective polynomial.

Let the set of isolated vertices be
\begin{equation}
\mathcal I:=\{j\in\Uset\mid \deg_H(j)=0\}.
\label{eq:sec4-isolated-set}
\end{equation}
For each input coordinate $k\in\{1,\dots,n_0\}$, define
\begin{equation}
\Lambda(k):=\{j\in\Uset\mid k\in S_j\}.
\label{eq:sec4-Lambdak}
\end{equation}
Under the matching assumption, one has $|\Lambda(k)|\le 2$ for every $k$. Indeed, if $|\Lambda(k)|\ge 3$, then those three vertices would be pairwise
adjacent and hence form a triangle, contradicting the matching assumption. Therefore,
 each input coordinate belongs to exactly one of the following sets:
\begin{alignat}{2}
T_j &:=\{k\mid \Lambda(k)=\{j\}\} \quad & & (j\in\Uset), \label{eq:sec4-Tj} \\
T_e &:=\{k\mid \Lambda(k)=\{i,j\}\} \quad  & & (e=\{i,j\}\in E), \label{eq:sec4-Te}
\end{alignat}
and
\begin{equation}
T_0:=\{1,\dots,n_0\}\setminus \bigcup_{j\in\Uset}S_j.
\label{eq:sec4-T0}
\end{equation}
That is,
\begin{equation}
\{1,\dots,n_0\}
=
T_0
\sqcup
\Bigl(\bigsqcup_{j\in\Uset}T_j\Bigr)
\sqcup
\Bigl(\bigsqcup_{e\in E}T_e\Bigr).
\label{eq:sec4-inputpartition}
\end{equation}


The input coordinates in $T_0$ do not appear in the nonlinear constraints of the undecided ReLUs. They appear only in the box constraints
\[
\underline u_k\le u_k\le \bar u_k
\qquad (k\in T_0)
\]
and in the linear objective term $-\sum_{k\in T_0}\delta_k u_k$. Thus $u_{T_0}$ forms a free linear block that shares neither nonlinear equalities nor ReLU constraints with any other local block. Accordingly, when $T_0\neq\emptyset$, if $\Uset=\emptyset$, the reduced problem is simply a linear optimization problem over the input box and can be certified by the box-affine argument in Lemma~\ref{lem:sec5-box-affine-certificate}. Otherwise, we fix one connected component $C^\circ$ of the matching and absorb $u_{T_0}$ together with the linear term $-\sum_{k\in T_0}\delta_k u_k$ into the local block of that component. Equivalently, the variables, box generators, and local affine objective of $C^\circ$ are augmented to include $u_{T_0}$, the bounds $\underline u_k\le u_k\le \bar u_k$ for $k\in T_0$, and the term $-\sum_{k\in T_0}\delta_k u_k$. Since this absorption creates no new variable sharing with the other components, the direct-product structure of the reduced problem is preserved.

\subsection{Componentwise decomposition of the objective polynomial and local optimal values}

For the objective polynomial of the reduced problem,
\begin{equation}
p_\gamma(u,v,x):=\gamma-c^\top(\bar A x+\bar B u+\bar a),
\label{eq:sec4-pgamma}
\end{equation}
define
\begin{equation}
\eta_\gamma:=\gamma-c^\top\bar a,
\qquad
\alpha_j:=(\bar A^\top c)_j\quad (j\in\Uset),
\qquad
\delta_k:=(\bar B^\top c)_k\quad (k=1,\dots,n_0).
\label{eq:sec4-alpha-delta}
\end{equation}
Then
\begin{equation}
p_\gamma
=
\eta_\gamma
-
\sum_{j\in\Uset}\alpha_j x_j
-
\sum_{k=1}^{n_0}\delta_k u_k.
\label{eq:sec4-pgamma-linearform}
\end{equation}
For each isolated vertex $j\in\mathcal I$, define
\begin{equation}
\Theta_j^{\mathrm{lin}}(\zeta_j)
:=
-\alpha_j x_j
- \sum_{k\in T_j}\delta_k u_k,
\label{eq:sec4-component-vertex}
\end{equation}
where $\zeta_j:=(u_{T_j},v_j,x_j)$. For
each edge component $e=\{i,j\}\in E$, define
\begin{equation}
\Theta_e^{\mathrm{lin}}(\zeta_e)
:=
-\alpha_i x_i-\alpha_j x_j
-
\sum_{k\in T_i\cup T_e\cup T_j}\delta_k u_k,
\label{eq:sec4-component-edge}
\end{equation}
where $\zeta_e:=(u_{T_i\cup T_e\cup T_j},v_i,x_i,v_j,x_j)$.
If the base component $C^\circ$ absorbs $T_0$, then the corresponding function
 $\Theta_{C^\circ}^{\mathrm{lin}}$ and local block
are enlarged to include $-\sum_{k\in T_0}\delta_k u_k$ and $u_{T_0}$.
Equivalently, the local box generators of $C^\circ$ are augmented by
 $u_k-\underline u_k$ and $\bar u_k-u_k$ for $k\in T_0$. Under this convention, distinct connected components in the matching
 share no variables. Therefore,
\begin{equation}
p_\gamma
=
\eta_\gamma
+
\sum_{e\in E}\Theta_e^{\mathrm{lin}}(\zeta_e)
+
\sum_{j\in\mathcal I}\Theta_j^{\mathrm{lin}}(\zeta_j).
\label{eq:sec4-exact-linear-decomp}
\end{equation}

We next define the local feasible sets corresponding to each component. For an isolated vertex $j\in\mathcal I$, let
\begin{align}
g_j(\zeta_j)&:=\bigl\{u_k-\underline u_k,\ \bar u_k-u_k\ (k\in T_j),\ x_j,\ x_j-v_j\bigr\},
\label{eq:sec4-gj}\\
h_j(\zeta_j)&:=\bigl\{v_j-\bar W_{j,T_j}u_{T_j}-\bar b_j,\ x_j(x_j-v_j)\bigr\},
\label{eq:sec4-hj}
\end{align}
and for an edge component $e=\{i,j\}\in E$, let
\begin{align}
g_e(\zeta_e)
&:=\bigl\{u_k-\underline u_k,\ \bar u_k-u_k\ (k\in T_i\cup T_e\cup T_j),\ x_i,\ x_i-v_i,\ x_j,\ x_j-v_j\bigr\},
\label{eq:sec4-ge}\\
h_e(\zeta_e)
&:=\Bigl\{v_i-\bar W_{i,T_i}u_{T_i}-\bar W_{i,T_e}u_{T_e}-\bar b_i,\\
&\qquad\ v_j-\bar W_{j,T_j}u_{T_j}-\bar W_{j,T_e}u_{T_e}-\bar b_j,\\
&\qquad\ x_i(x_i-v_i),\ x_j(x_j-v_j)\Bigr\}.
\label{eq:sec4-he}
\end{align}
We write the corresponding local feasible sets as
\begin{equation}
K_j:=\{\zeta_j\mid g_j(\zeta_j)\ge 0,\ h_j(\zeta_j)=0\}
\qquad (j\in\mathcal I),
\label{eq:sec4-Kj}
\end{equation}
\begin{equation}
K_e:=\{\zeta_e\mid g_e(\zeta_e)\ge 0,\ h_e(\zeta_e)=0\}
\qquad (e\in E).
\label{eq:sec4-Ke}
\end{equation}
Define the corresponding local optimal values by
\begin{equation}
m_j:=\min\{\Theta_j^{\mathrm{lin}}(\zeta_j)\mid \zeta_j\in K_j\}
\qquad (j\in\mathcal I),
\label{eq:sec4-localopt-vertex}
\end{equation}
\begin{equation}
m_e:=\min\{\Theta_e^{\mathrm{lin}}(\zeta_e)\mid \zeta_e\in K_e\}
\qquad (e\in E).
\label{eq:sec4-localopt-edge}
\end{equation}

\begin{proposition}[value decomposition under the matching assumption]
\label{prop:sec4-value-decomposition}
Assume that the input-sharing graph $H=(\Uset,E)$ is a matching. Then,
the feasible set of the reduced problem decomposes as a direct product of the local feasible sets of
the isolated-vertex and single-edge components. Consequently, for any $\gamma\in\R$ one has
\begin{equation}
\min p_\gamma
=
\eta_\gamma
+
\sum_{e\in E}m_e
+
\sum_{j\in\mathcal I}m_j.
\label{eq:sec4-global-value-decomp}
\end{equation}
\end{proposition}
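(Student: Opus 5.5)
The plan is to show that the reduced feasible set is a Cartesian product of the local sets $K_C$ over the connected components, with the $T_0$ coordinates absorbed into $C^\circ$. Given that, \eqref{eq:sec4-exact-linear-decomp} writes $p_\gamma$ as a constant plus a sum of functions of disjoint variable blocks, so its minimum over the product splits into the sum of the local minima.

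First we use the partition \eqref{eq:sec4-inputpartition} of the input coordinates, which is valid because $|\Lambda(k)|\le 2$ under the matching assumption. By Proposition~\ref{prop:sec4-components-matching}, the components are exactly the isolated vertices $j\in\mathcal I$ and the edges $e\in E$. We assign $u_{T_j}$ to the vertex $j\in\mathcal I$, assign $u_{T_i\cup T_e\cup T_j}$ to the edge $e=\{i,j\}$, and give $u_{T_0}$ to $C^\circ$. Next we check that, for an isolated vertex $j$, we have $S_j=T_j$. Indeed, any $k\in S_j$ with $|\Lambda(k)|=2$ would create an edge at $j$, contradicting $\deg_H(j)=0$. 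For an edge $e=\{i,j\}$, we have $S_i=T_i\cup T_e$, because a coordinate of $S_i$ shared with a third vertex would give $i$ degree $2$, and similarly $S_j=T_j\cup T_e$.

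Consequently each constraint in \eqref{eq:reduced-semialgebraic-system-sec3} involves only the variables of one component. The affine equation $v_h=\bar W_{h:}u+\bar b_h$ reduces to $v_h=\bar W_{h,S_h}u_{S_h}+\bar b_h$, which coincides with the corresponding element of $h_j$ or $h_e$. The box constraints $\underline u\le u\le\bar u$ separate coordinatewise. Hence a point $(u,v,x)$ is feasible if and only if each component subtuple $\zeta_C$ lies in $K_C$, where $K_{C^\circ}$ is augmented by the $T_0$ box constraints. Since the blocks are disjoint and cover all variables, the feasible set is the product $\prod_C K_C$. The objective identity \eqref{eq:sec4-exact-linear-decomp} follows directly from \eqref{eq:sec4-pgamma-linearform}, because every index $j\in\Uset$ and every coordinate $k$ appears in exactly one $\Theta^{\mathrm{lin}}_C$.

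For the value identity, we use the fact that for a separable function $\sum_C f_C(\zeta_C)$ over a product set, the infimum equals $\sum_C\inf_{K_C}f_C$. This holds because we may choose near-optimal points independently in each factor, and the reverse inequality is trivial. Each $K_C$ is nonempty and compact, since it is defined by box inputs with $x$ and $v$ determined continuously. Each $\Theta^{\mathrm{lin}}_C$ is continuous, so every minimum $m_C$ is attained, and adding $\eta_\gamma$ gives \eqref{eq:sec4-global-value-decomp}. The only delicate step is the bookkeeping with $T_0$: $m_{C^\circ}$ must be read as the minimum of the augmented local problem, and the case $\Uset=\emptyset$, which is a pure linear problem over the box, must be excluded or treated separately.
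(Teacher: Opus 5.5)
Your proposal is correct and takes the same route as the paper: use the input partition \eqref{eq:sec4-inputpartition}, with $u_{T_0}$ absorbed into $C^\circ$, to identify the reduced feasible set with $\prod_{e\in E}K_e\times\prod_{j\in\mathcal I}K_j$, and then split the minimum of the separable objective \eqref{eq:sec4-exact-linear-decomp} into the local minima. Your extra details are sound but leave the argument unchanged: the checks that $S_j=T_j$ for isolated vertices and $S_i=T_i\cup T_e$ on edges, the compactness argument for attainment of each $m_C$, and the remark that $\Uset=\emptyset$ must be treated separately (as the paper does via Lemma~\ref{lem:sec5-box-affine-certificate}) are all points the paper leaves implicit.
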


\begin{proof}
The key point is that, under the matching assumption, both the constraints and the objective decompose
according to the same component partition.
Indeed, by Proposition~\ref{prop:sec4-components-matching}, each connected component is either an isolated vertex or a single edge,
and by \eqref{eq:sec4-inputpartition}, each input coordinate is assigned to exactly one component. Moreover, as mentioned above,
the variables in $T_0$ appear only in box constraints and linear objective terms. Absorbing them into the base component $C^\circ$ therefore creates no coupling with the other components.
Therefore, the global feasible set of the reduced problem can be identified with
\[
\prod_{e\in E}K_e\times \prod_{j\in\mathcal I}K_j.
\]

On the other hand, \eqref{eq:sec4-exact-linear-decomp} shows that the objective polynomial decomposes exactly into the constant term $\eta_\gamma$ and the sum of the local objective functions. Hence, minimizing over a direct-product set separates into componentwise minimizations:
\[
\min p_\gamma
=
\eta_\gamma
+
\sum_{e\in E}\min_{\zeta_e\in K_e}\Theta_e^{\mathrm{lin}}(\zeta_e)
+
\sum_{j\in\mathcal I}\min_{\zeta_j\in K_j}\Theta_j^{\mathrm{lin}}(\zeta_j).
\]
By the definitions \eqref{eq:sec4-localopt-edge} and \eqref{eq:sec4-localopt-vertex}, these minima are exactly $m_e$ and $m_j$, which yields \eqref{eq:sec4-global-value-decomp}.
\end{proof}

\subsection{Assembly from local tightness to global tightness}

At this stage, the global assembly argument required for the paper consists of the following three components.
\begin{enumerate}[label=(\roman*),leftmargin=2.3em]
\item the feasible set decomposes into a direct product of local feasible sets,
\item the objective polynomial decomposes into the sum of local affine objective functions, and
\item if each local certificate belongs to the truncated quadratic module/ideal of degree $2$, then any finite sum of them yields a global certificate at the same degree.
\end{enumerate}
Thus, this section isolates the assembly details needed to combine the local results, so that the remaining mathematical task is to prove order-$2$ tightness for a single two-unit edge component.

Define
\begin{equation}
\mathcal C_{\mathrm{sp}}^{(2)}
:=
\left\{
\sum_{e\in E}Q_e(\zeta_e)+\sum_{j\in\mathcal I}P_j(\zeta_j)
\ \middle|\
Q_e\in \qmod_2(g_e)+\ideal_2(h_e),\
P_j\in \qmod_2(g_j)+\ideal_2(h_j)
\right\}.
\label{eq:sec4-sparsecone}
\end{equation}
This defines the order-$2$ certificate class for the block-sparse SOS relaxation induced by the connected-component
decomposition of the matching.
The corresponding relaxation value is denoted by
\begin{equation}
\gamma_{\mathrm{sp}}^{(2)}
:=
\inf\bigl\{\gamma\in\R\mid p_\gamma\text{ admits a certificate belonging to }\mathcal C_{\mathrm{sp}}^{(2)}\bigr\}.
\label{eq:sec4-gamma-sp}
\end{equation}

\begin{proposition}[global tightness criterion under the matching assumption]
\label{prop:sec4-global-exactness}
Assume that the input-sharing graph $H=(\Uset,E)$ is a matching. Suppose further that for every isolated vertex $j\in\mathcal I$ and every edge $e\in E$,
\begin{equation}
\Theta_j^{\mathrm{lin}}-m_j
\in
\qmod_2(g_j)+\ideal_2(h_j)
\qquad (j\in\mathcal I),
\label{eq:sec4-local-cert-vertex}
\end{equation}
and
\begin{equation}
\Theta_e^{\mathrm{lin}}-m_e
\in
\qmod_2(g_e)+\ideal_2(h_e)
\qquad (e\in E)
\label{eq:sec4-local-cert-edge}
\end{equation}
hold. Then, the order-$2$ block-sparse SOS relaxation is tight, namely,
\begin{equation}
\gamma_{\mathrm{sp}}^{(2)}=\gamma^*.
\label{eq:sec4-exactness-sp}
\end{equation}
\end{proposition}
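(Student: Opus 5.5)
The argument has two inequalities. One comes from soundness of the certificate class; the other comes from adding the local certificates together and using the value decomposition of Proposition~\ref{prop:sec4-value-decomposition}.

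\emph{Lower bound $\gamma^*\le\gamma_{\mathrm{sp}}^{(2)}$.} Take any $\gamma$ for which $p_\gamma=\sum_{e}Q_e(\zeta_e)+\sum_{j}P_j(\zeta_j)$ with $Q_e\in\qmod_2(g_e)+\ideal_2(h_e)$ and $P_j\in\qmod_2(g_j)+\ideal_2(h_j)$. Let $(u,v,x)$ be feasible for the reduced problem \eqref{eq:reduced-verification-problem-sec3}. Its restriction $\zeta_e$ lies in $K_e$ and its restriction $\zeta_j$ lies in $K_j$. The local generators are a subset of the global constraints \eqref{eq:reduced-semialgebraic-system-sec3} together with the box bounds; the affine equations defining $v_h$ are exactly those equations, written in terms of $u_{T_h}$ and $u_{T_e}$, because $S_h\subseteq T_h\cup T_e$. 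Hence each $Q_e$ and each $P_j$ is nonnegative at the corresponding restriction, so $p_\gamma\ge0$ on the feasible set. Since \eqref{eq:reduced-verification-problem-sec3} is equivalent to \eqref{eq:verification-problem}, this gives $\gamma\ge\gamma^*$. Taking the infimum yields $\gamma_{\mathrm{sp}}^{(2)}\ge\gamma^*$. If $T_0\neq\emptyset$, the absorbed box generators sit in the block of $C^\circ$ and the same argument applies unchanged.

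\emph{Upper bound $\gamma_{\mathrm{sp}}^{(2)}\le\gamma^*$.} Set $\gamma=\gamma^*$. The feasible set is compact and nonempty, and by Proposition~\ref{prop:sec4-value-decomposition} the quantity $\min p_{\gamma}$ equals $\gamma-\max c^\top y$. This gives
\[
0=\min p_{\gamma^*}=\eta_{\gamma^*}+\sum_{e\in E}m_e+\sum_{j\in\mathcal I}m_j .
\]
From \eqref{eq:sec4-exact-linear-decomp} we then obtain
\[
p_{\gamma^*}=\Bigl(\eta_{\gamma^*}+\sum_e m_e+\sum_j m_j\Bigr)+\sum_e(\Theta_e^{\mathrm{lin}}-m_e)+\sum_j(\Theta_j^{\mathrm{lin}}-m_j)=\sum_e(\Theta_e^{\mathrm{lin}}-m_e)+\sum_j(\Theta_j^{\mathrm{lin}}-m_j).
\]
Setting $Q_e:=\Theta_e^{\mathrm{lin}}-m_e$ and $P_j:=\Theta_j^{\mathrm{lin}}-m_j$, the hypotheses \eqref{eq:sec4-local-cert-edge} and \eqref{eq:sec4-local-cert-vertex} show that this expression lies in $\mathcal C_{\mathrm{sp}}^{(2)}$. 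Therefore $\gamma_{\mathrm{sp}}^{(2)}\le\gamma^*$. The infimum is in fact attained, so tightness holds.

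\emph{Main obstacle.} The delicate part is the bookkeeping, not any inequality. Three points need checking:
\begin{enumerate}[label=(\roman*)]
\item The local generators correctly reproduce the global ones, which relies on $S_h\subseteq T_h\cup T_e$ under matching (shown via $|\Lambda(k)|\le 2$).
\item The degenerate cases are handled. When $\Uset=\emptyset$, there are no blocks at all, and the problem is the linear box problem covered by Lemma~\ref{lem:sec5-box-affine-certificate}; that case is treated separately, so the statement implicitly assumes $\Uset\neq\emptyset$.
\item The constant $\eta_\gamma$ is absorbed into a single block; this step is harmless, since we showed the total constant vanishes at $\gamma=\gamma^*$.
\end{enumerate}
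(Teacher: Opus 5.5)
Your proof is correct and follows the paper's argument essentially verbatim. For one inequality, you evaluate the value decomposition of Proposition~\ref{prop:sec4-value-decomposition} at $\gamma=\gamma^*$ to get $\eta_{\gamma^*}+\sum_e m_e+\sum_j m_j=0$, and then sum the local certificates to conclude $p_{\gamma^*}\in\mathcal C_{\mathrm{sp}}^{(2)}$. For the reverse inequality, you use that each local summand is nonnegative on $K_e$ or $K_j$. Your extra bookkeeping is correct and makes explicit points the paper's proof leaves implicit: you check $S_h\subseteq T_h\cup T_e$, and you note that the case $\Uset=\emptyset$ must be handled separately.
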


\begin{proof}
Applying Proposition~\ref{prop:sec4-value-decomposition} with $\gamma=\gamma^*$, and using the fact that the global minimum of $p_{\gamma^*}$ is $0$, we obtain
\begin{equation}
\eta_{\gamma^*}+\sum_{e\in E}m_e+\sum_{j\in\mathcal I}m_j=0.
\label{eq:sec4-balance-at-opt}
\end{equation}
On the other hand, summing the assumptions \eqref{eq:sec4-local-cert-vertex} and \eqref{eq:sec4-local-cert-edge} gives
\begin{align*}
&\sum_{e\in E}\bigl(\Theta_e^{\mathrm{lin}}-m_e\bigr)
+\sum_{j\in\mathcal I}\bigl(\Theta_j^{\mathrm{lin}}-m_j\bigr)\\
&\qquad=
\Bigl(\sum_{e\in E}\Theta_e^{\mathrm{lin}}+\sum_{j\in\mathcal I}\Theta_j^{\mathrm{lin}}\Bigr)
-\sum_{e\in E}m_e-\sum_{j\in\mathcal I}m_j\\
&\qquad=
p_{\gamma^*}-\eta_{\gamma^*}-\sum_{e\in E}m_e-\sum_{j\in\mathcal I}m_j
=
p_{\gamma^*},
\end{align*}
where the last equality follows from \eqref{eq:sec4-balance-at-opt}.
Hence, $p_{\gamma^*}\in \mathcal C_{\mathrm{sp}}^{(2)}$, and therefore $\gamma_{\mathrm{sp}}^{(2)}\le \gamma^*$.

For the reverse inequality, each summand in \eqref{eq:sec4-sparsecone} is nonnegative on the corresponding local feasible set,
and therefore their sum is also nonnegative on the global feasible set.
Indeed, if $p_\gamma\in \mathcal C_{\mathrm{sp}}^{(2)}$, then $p_\gamma$ is nonnegative on the feasible set of the reduced problem, so $\gamma$ is a valid upper bound for the original verification problem. Thus $\gamma\ge \gamma^*$, and by the definition \eqref{eq:sec4-gamma-sp} we obtain $\gamma^*\le \gamma_{\mathrm{sp}}^{(2)}$. This proves \eqref{eq:sec4-exactness-sp}.
\end{proof}

This completes the global assembly step. It remains to
 prove the edge-local certificate under the regular rank-one condition. Isolated vertices are handled by the one-unit results of Azuma et al.~\cite{AKY2026}.

%% file: section-05.tex
\section{Local Exactness for Regular Rank-One Edges and the Main Theorem for Matching}\label{sec:edge}

In this section, we establish two-unit local tightness for the
only nontrivial component that remains after the component decomposition of Section~\ref{sec:reduction}, namely a single-edge component.
Our goal is to prove that for every edge-local affine objective $\ell_e$ expressible in the rank-one reduced variables,
\[
\ell_e-\gamma_e^\star\in \qmod_2(g_e)+\ideal_2(h_e)
\]
holds. We then combine this edge-local certificate
 with the global assembly result of Section~\ref{sec:reduction} to obtain the main tightness
 theorem for matching input-sharing graphs.

The argument proceeds in four steps: reduce the shared part to a scalar, describe the one-unit and two-unit hulls using endpoint slices, convert this hull description into an order-$2$ certificate using the result of Azuma et al.~\cite{AKY2026}
 and Farkas' lemma, and finally combine the local result with the assembly criterion of Section~\ref{sec:reduction}.

Recall that \textbf{(M3a)} provides a one-dimensional shared scalar,
while \textbf{(M3b)} ensures that both units remain undecided after that scalar is fixed.
Fix an edge $e=\{i,j\}\in E$. We first describe the geometry in affinely reduced coordinates and then
translate the resulting certificate back to the original edge block.

\subsection{An edge-local normal form induced by regular rank-one edges}

For each edge $e=\{i,j\}$, recall the shared input set introduced in Section~3,
\[
S_e:=S_i\cap S_j.
\]
Condition \textbf{(M3a)} allows us to reduce this shared part to a single scalar.

By the assumption $q_e\le 1$,
\[
q_e=
\dim \operatorname{span}\{\bar W_{i,S_e}^{\top},\bar W_{j,S_e}^{\top}\}\le 1,
\]
so there exist a nonzero vector $r_e\in \R^{S_e}$ and scalars $\alpha_i,\alpha_j$ such that
\begin{equation}
\bar W_{i,S_e}=\alpha_i r_e^{\top},
\qquad
\bar W_{j,S_e}=\alpha_j r_e^{\top}.
\label{eq:sec5-collinear}
\end{equation}
Here $r_e$ is a reference direction representing the common input direction
through which the two units depend on the shared part.

For each $h\in\{i,j\}$, split the pre-activation into its shared and nonshared parts:
\[
v_h=\bar W_{h,S_e}u_{S_e}+\bar W_{h,S_h\setminus S_e}u_{S_h\setminus S_e}+\bar b_h.
\]
Introduce the scalar obtained from the shared part by
\begin{equation}
t_0:=r_e^{\top}u_{S_e}
\label{eq:sec5-shared-scalar}
\end{equation}
and call the remaining term
\[
p_h:=\bar W_{h,S_h\setminus S_e}u_{S_h\setminus S_e}+\bar b_h
\qquad (h=i,j)
\]
the private affine term of unit $h$. This quantity depends only on input coordinates outside the shared part.
Since it is the affine image of a box, it ranges over some interval $[A_h,B_h]$. Hence,
 the pre-activation can be written as
\begin{equation}
v_h=p_h+\alpha_h t_0,
\qquad
p_h\in [A_h,B_h]
\qquad (h=i,j).
\label{eq:sec5-normal-form-pre}
\end{equation}

Write the range of the shared scalar $t_0$ as
\[
I_e:=\{r_e^{\top}u_{S_e}\mid \underline u_{S_e}\le u_{S_e}\le \bar u_{S_e}\}=[\underline\tau_e,\overline\tau_e].
\]
If $\underline\tau_e=\overline\tau_e$, then the shared scalar is fixed and the term $\alpha_h t_0$
in \eqref{eq:sec5-normal-form-pre} can be absorbed into the private term as a constant. In that case
 the edge block degenerates into the direct product of two one-unit blocks,
 hence no genuine
 two-unit analysis is needed. Therefore, for the nontrivial case, we assume $\underline\tau_e<\overline\tau_e$
 and normalize by
\[
t:=\frac{2t_0-(\underline\tau_e+\overline\tau_e)}{\overline\tau_e-\underline\tau_e}.
\]
To simplify notation, we write $z_h$ for the pre-activation in these rescaled coordinates and obtain
\begin{equation}
z_h=p_h+\alpha_h t,
\qquad
p_h\in [A_h,B_h]
\qquad (h=i,j).
\label{eq:sec5-normal-form}
\end{equation}
Moreover, by normalization,
\begin{equation}
t\in[-1,1].
\label{eq:sec5-tbox}
\end{equation}
Thus the affine change of variables maps $t_0=\underline\tau_e$ to $t=-1$ and $t_0=\overline\tau_e$ to $t=1$. In passing from \eqref{eq:sec5-normal-form-pre} to \eqref{eq:sec5-normal-form}, the constant part $\alpha_h(\underline\tau_e+\overline\tau_e)/2$ is absorbed into the private term and the coefficient of $t$ is renamed $\alpha_h$.
The degenerate case is treated separately in Theorem~\ref{thm:sec5-edge-degree2}.

Let
\begin{equation}
L_h(t):=A_h+\alpha_h t,
\qquad
U_h(t):=B_h+\alpha_h t
\qquad (h=i,j).
\label{eq:sec5-LU}
\end{equation}
Then, the statement that both units remain undecided at the endpoints $t=\pm 1$ can be written as
\begin{equation}
L_h(-1)<0,
\quad
L_h(1)<0,
\quad
U_h(-1)>0,
\quad
U_h(1)>0
\qquad (h=i,j).
\label{eq:sec5-generic-unstable}
\end{equation}

\begin{lemma}[equivalence between edge regularity and endpoint conditions]
\label{lem:sec5-regularity-endpoints}
Assume that $e=\{i,j\}$ is a regular rank-one edge in the sense of Section~\ref{sec:graph}. Then one of the following holds.
\begin{enumerate}[label=\rmfamily(\alph*),leftmargin=28pt]
\item The range $I_e=[\underline\tau_e,\overline\tau_e]$ of the shared scalar is degenerate, that is, $\underline\tau_e=\overline\tau_e$, and the edge block degenerates into the direct product of two one-unit blocks.
\item One has $\underline\tau_e<\overline\tau_e$, and after choosing the normalization in \eqref{eq:sec5-normal-form}--\eqref{eq:sec5-tbox} appropriately, condition \eqref{eq:sec5-generic-unstable} holds.
\end{enumerate}
Conversely, in the nondegenerate case, if \eqref{eq:sec5-generic-unstable} holds under some normalization, then the edge-regularity condition \eqref{eq:M3b-sec3} holds.
\end{lemma}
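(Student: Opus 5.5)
The plan is to reduce everything to the identity between the endpoint quantities $L_h(\pm1),U_h(\pm1)$ and the quantities $\underline p_{h,e}+\overline s_{h,e}$ and $\overline p_{h,e}+\underline s_{h,e}$ appearing in \eqref{eq:M3b-sec3}. First we dispose of the degenerate case. If $\underline\tau_e=\overline\tau_e$, then $t_0=r_e^\top u_{S_e}$ is constant on $\mathcal U_{S_e}$. The term $\alpha_h t_0$ in \eqref{eq:sec5-normal-form-pre} is then a constant and can be absorbed into $\bar b_h$. After this, $v_i$ depends only on $u_{S_i\setminus S_e}$ and $v_j$ only on $u_{S_j\setminus S_e}$. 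These sets are disjoint, since anything common to $S_i,S_j$ lies in $S_e$. The variables $u_{S_e}$ enter neither ReLU constraint and act as a free box block, which gives alternative (a).

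In the nondegenerate case we compute the shared bounds. Since $\bar W_{h,S_e}u_{S_e}=\alpha_h t_0$ with $t_0$ ranging over $[\underline\tau_e,\overline\tau_e]$, we get
\[
\underline s_{h,e}=\min\{\alpha_h\underline\tau_e,\alpha_h\overline\tau_e\},\qquad
\overline s_{h,e}=\max\{\alpha_h\underline\tau_e,\alpha_h\overline\tau_e\}.
\]
The private bounds satisfy $\underline p_{h,e}=A_h$ and $\overline p_{h,e}=B_h$ before normalization. This holds because $p_h$ is exactly the private affine term of \eqref{eq:plan-private-bounds-sec3}, as $P_h=S_h\setminus S_e$.

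Next we track the normalization. We have $t_0=c+d t$ with $c=(\underline\tau_e+\overline\tau_e)/2$ and $d=(\overline\tau_e-\underline\tau_e)/2>0$. The normalized data are then $A_h'=A_h+\alpha_h c$, $B_h'=B_h+\alpha_h c$ and $\alpha_h'=\alpha_h d$. This gives
\[
L_h(\pm1)=A_h+\alpha_h t_0|_{t=\pm1},\qquad U_h(\pm1)=B_h+\alpha_h t_0|_{t=\pm1},
\]
where $t=\pm1$ corresponds to $t_0=\overline\tau_e$ and $t_0=\underline\tau_e$ respectively. Hence
\[
\max\{L_h(-1),L_h(1)\}=\underline p_{h,e}+\overline s_{h,e},\qquad
\min\{U_h(-1),U_h(1)\}=\overline p_{h,e}+\underline s_{h,e}.
\]
Therefore \eqref{eq:M3b-sec3} holds if and only if all four inequalities in \eqref{eq:sec5-generic-unstable} hold. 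This identity gives both directions at once.

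One point needs care, and it is the only possible obstacle. The phrase ``choosing the normalization appropriately'' presumably allows replacing $r_e$ by $-r_e$, which flips the sign of $t$. The $\max$/$\min$ identities above are symmetric in $t\mapsto -t$. Scaling $r_e$ by a positive or negative constant changes only $c$, $d$, and the sign convention, and it leaves the set $\{L_h(\pm1)\}$ invariant. So the equivalence holds under \emph{any} normalization, which establishes (b) and the converse. Finally, we remark that if $\alpha_h=0$ the computation still goes through, since then $\underline s_{h,e}=\overline s_{h,e}=0$.
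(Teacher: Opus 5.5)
Your proof is correct and follows essentially the same route as the paper. You handle the degenerate case by absorbing the constant shared term, and in the nondegenerate case you identify $\max\{L_h(-1),L_h(1)\}$ and $\min\{U_h(-1),U_h(1)\}$ with $\underline p_{h,e}+\overline s_{h,e}$ and $\overline p_{h,e}+\underline s_{h,e}$; the paper writes these as $A_h+|\alpha_h|$ and $B_h-|\alpha_h|$ in normalized coordinates, while your explicit tracking of $c$ and $d$ (and the invariance under rescaling or reversing $r_e$) makes its terse normalization step precise.
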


\begin{proof}
If $I_e=[\underline\tau_e,\overline\tau_e]$ is degenerate with $\underline\tau_e=\overline\tau_e$, then the shared scalar is fixed, and by absorbing $\alpha_h t_0$ into the private term, the edge block degenerates into the direct product of two one-unit blocks. Thus (a) follows.

We now consider the nondegenerate case $\underline\tau_e<\overline\tau_e$.
By \eqref{eq:M3a-sec3}, the shared part reduces to a one-dimensional scalar, and the pre-activations take the form \eqref{eq:sec5-normal-form}. If we normalize so that the center of the affine image of the shared part is absorbed into the private term and only its signed length remains in $\alpha_h t$, then \eqref{eq:M3b-sec3} is equivalent to
\[
A_h+|\alpha_h|<0< B_h-|\alpha_h|
\qquad (h=i,j).
\]
On the other hand, by \eqref{eq:sec5-LU},
\[
\max\{L_h(-1),L_h(1)\}=A_h+|\alpha_h|,
\qquad
\min\{U_h(-1),U_h(1)\}=B_h-|\alpha_h|,
\]
which is equivalent to \eqref{eq:sec5-generic-unstable}. Thus (b) follows. The converse implication is identical.
\end{proof}

Therefore, in the nondegenerate case, we may replace the regular rank-one edge condition of Section~\ref{sec:graph} by the endpoint condition \eqref{eq:sec5-generic-unstable} via Lemma~\ref{lem:sec5-regularity-endpoints}.
Under these conditions, the convex hull of the slice of each unit at $t=\pm 1$ is the three-vertex polytope
\[
\operatorname{conv}\{(L_h(\pm 1),0),(0,0),(U_h(\pm 1),U_h(\pm 1))\}.
\]
In degenerate boundary cases where equality occurs, one of the branches simply disappears, and the argument below becomes even simpler.

\subsection{Description of the one-unit hull by endpoint slices}

For each $h\in\{i,j\}$, define the reduced one-unit graph by
\begin{equation}
\mathcal S_h
:=
\{(t,z_h,x_h)\mid t\in[-1,1],\ L_h(t)\le z_h\le U_h(t),\ x_h=\relu(z_h)\}.
\label{eq:sec5-Sh}
\end{equation}
Also write the convex hulls of the endpoint slices at $t=-1$ and $t=1$ as
\begin{align}
C_h^-&:=\operatorname{conv}\{(L_h(-1),0),(0,0),(U_h(-1),U_h(-1))\},
\label{eq:sec5-Ch-minus}\\
C_h^+&:=\operatorname{conv}\{(L_h(1),0),(0,0),(U_h(1),U_h(1))\}.
\label{eq:sec5-Ch-plus}
\end{align}

Here $C_h^-$ and $C_h^+$ are subsets of the $(z_h,x_h)$-plane, whose first coordinate is the pre-activation $z_h$
and second coordinate is the ReLU output $x_h$. At the endpoints $t=\pm 1$, the one-unit graph consists of the negative branch $(z_h,0)$ and the positive branch $(z_h,z_h)$; thus,
its convex hull is the convex hull of the three points $(L_h(\pm 1),0)$, $(0,0)$, and $(U_h(\pm 1),U_h(\pm 1))$. We use these endpoint slices to describe the exact hulls of both the one-unit and the two-unit problems.

\begin{lemma}[endpoint decomposition of the one-unit hull]
\label{lem:sec5-oneunit-endpoint}
For each $h\in\{i,j\}$,
\begin{equation}
\operatorname{conv}(\mathcal S_h)
=
\operatorname{conv}\bigl(\{-1\}\times C_h^-\ \cup\ \{1\}\times C_h^+\bigr).
\label{eq:sec5-oneunit-endpoint}
\end{equation}
\end{lemma}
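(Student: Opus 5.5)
The plan is to prove the two inclusions in \eqref{eq:sec5-oneunit-endpoint} separately. Both use only the affine dependence of $L_h$ and $U_h$ on $t$ and the endpoint condition \eqref{eq:sec5-generic-unstable}. Write $Q_h:=\operatorname{conv}\bigl(\{-1\}\times C_h^-\cup\{1\}\times C_h^+\bigr)$; it is a convex polytope.

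\emph{Inclusion $\supseteq$.} By \eqref{eq:sec5-generic-unstable} we have $L_h(\pm1)<0<U_h(\pm1)$. Hence the three points $(\pm1,L_h(\pm1),0)$, $(\pm1,0,0)$ and $(\pm1,U_h(\pm1),U_h(\pm1))$ all lie in $\mathcal S_h$:
\begin{itemize}
\item for each of them $t=\pm1$ and $z_h\in[L_h(\pm1),U_h(\pm1)]$;
\item for each of them $x_h=\relu(z_h)$.
\end{itemize}
Therefore $\{\pm1\}\times C_h^\pm\subseteq\operatorname{conv}(\mathcal S_h)$, and taking the convex hull of the union gives $Q_h\subseteq\operatorname{conv}(\mathcal S_h)$. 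In the boundary cases where some endpoint value equals $0$, the corresponding vertex coincides with $(0,0)$, and the argument is unchanged.

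\emph{Inclusion $\subseteq$.} Since $Q_h$ is convex, it suffices to show $\mathcal S_h\subseteq Q_h$. Fix $(t,z_h,x_h)\in\mathcal S_h$ and set $\lambda:=(1-t)/2\in[0,1]$, so that $t=\lambda(-1)+(1-\lambda)(1)$. Because $L_h$ and $U_h$ are affine in $t$,
\[
L_h(t)=\lambda L_h(-1)+(1-\lambda)L_h(1),\qquad U_h(t)=\lambda U_h(-1)+(1-\lambda)U_h(1).
\]
We split by the sign of $z_h$ and lift the point along a single ReLU branch.

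\emph{Case $z_h\le 0$.} Here $x_h=0$. As $z^-$ ranges over $[L_h(-1),0]$ and $z^+$ over $[L_h(1),0]$, the combination $\lambda z^-+(1-\lambda)z^+$ ranges over the whole interval $[L_h(t),0]$. Since $z_h\in[L_h(t),0]$, we can pick $z^\pm$ in these intervals with $\lambda z^-+(1-\lambda)z^+=z_h$; for instance, take $z^\pm=\theta L_h(\pm1)$ with a common $\theta\in[0,1]$. The points $(z^\pm,0)$ lie on the segment from $(L_h(\pm1),0)$ to $(0,0)$, so $(z^\pm,0)\in C_h^\pm$. Then
\[
(t,z_h,0)=\lambda(-1,z^-,0)+(1-\lambda)(1,z^+,0)\in Q_h.
\]

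\emph{Case $z_h\ge0$.} Here $x_h=z_h$. Symmetrically, take $z^\pm=\theta U_h(\pm1)$ with $\theta\in[0,1]$ chosen so that $\lambda z^-+(1-\lambda)z^+=z_h$, which is possible because $z_h\in[0,U_h(t)]$. The points $(z^\pm,z^\pm)$ lie on the segment from $(0,0)$ to $(U_h(\pm1),U_h(\pm1))$, hence in $C_h^\pm$, and the same convex combination reproduces $(t,z_h,z_h)$.

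I do not expect a real obstacle. The one point needing care is that the lift must stay on a single ReLU branch, so that the identity $x=\relu(z)$ is preserved under convex combination. Scaling both endpoint values by a common $\theta$ achieves this, and it uses that $L_h(\pm1)$ and $U_h(\pm1)$ have the signs guaranteed by \eqref{eq:sec5-generic-unstable}. In degenerate boundary cases, one of the branches collapses to the point $(0,0)$, and the same construction applies verbatim.
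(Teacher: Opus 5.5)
Your proof is correct and follows essentially the same route as the paper. The inclusion $\supseteq$ comes from the endpoint triangles' vertices lying in $\mathcal S_h$. The inclusion $\subseteq$ writes each point of $\mathcal S_h$, with $\lambda=(1-t)/2$, as a convex combination of endpoint points on the same ReLU branch. Your explicit common-scaling choice $z^\pm=\theta L_h(\pm1)$ (resp.\ $\theta U_h(\pm1)$) simply makes concrete the interval identity $[L_h(t),0]=\lambda[L_h(-1),0]+(1-\lambda)[L_h(1),0]$ that the paper invokes.
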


\begin{proof}
Write the right-hand side as $H_h$. First, from the definition it is clear that $\{-1\}\times C_h^-\subset \operatorname{conv}(\mathcal S_h)$ and $\{1\}\times C_h^+\subset \operatorname{conv}(\mathcal S_h)$. Hence,
\[
H_h\subset \operatorname{conv}(\mathcal S_h).
\]

Conversely, take any $(t,z_h,x_h)\in\mathcal S_h$ and set
\[
\lambda:=\frac{1-t}{2}\in[0,1].
\]
Then $t=\lambda(-1)+(1-\lambda)(1)$.

If $z_h\le 0$, then $x_h=\relu(z_h)=0$. By the affine dependence in \eqref{eq:sec5-LU},
\[
[L_h(t),0]=\lambda [L_h(-1),0]+(1-\lambda)[L_h(1),0],
\]
thus, there exist $z_h^-\in [L_h(-1),0]$ and $z_h^+\in [L_h(1),0]$ such that
\[
z_h=\lambda z_h^-+(1-\lambda)z_h^+.
\]
Therefore,
\[
(t,z_h,x_h)
=
\lambda(-1,z_h^-,0)+(1-\lambda)(1,z_h^+,0)
\in H_h.
\]

If instead $z_h\ge 0$, then $x_h=z_h$. Similarly,
\[
[0,U_h(t)]=\lambda [0,U_h(-1)]+(1-\lambda)[0,U_h(1)],
\]
hence, there exist $z_h^-\in [0,U_h(-1)]$ and $z_h^+\in [0,U_h(1)]$ such that
\[
z_h=\lambda z_h^-+(1-\lambda)z_h^+.
\]
Hence,
\[
(t,z_h,x_h)
=
\lambda(-1,z_h^-,z_h^-)+(1-\lambda)(1,z_h^+,z_h^+)
\in H_h.
\]

Thus $\mathcal S_h\subset H_h$, and since the right-hand side is convex, we obtain
\[
\operatorname{conv}(\mathcal S_h)\subset H_h.
\]
The proof of \eqref{eq:sec5-oneunit-endpoint} is complete.
\end{proof}

By Lemma~\ref{lem:sec5-oneunit-endpoint}, the one-unit convex hull can be described without treating the
entire
 interval directly: it is the convex combination of the two three-vertex convex hulls at the endpoints $t=\pm 1$. We now lift this fact to the two-unit edge block.

\subsection{Exact decomposition of the two-unit edge hull}

Define the reduced two-unit edge graph by
\begin{equation}
\mathcal S_e
:=
\{(t,z_i,z_j,x_i,x_j)
\mid
(t,z_i,x_i)\in\mathcal S_i,
\ (t,z_j,x_j)\in\mathcal S_j
\}.
\label{eq:sec5-Se}
\end{equation}
Using the convex hull of each unit, define also
\begin{equation}
G_e
:=
\{(t,z_i,z_j,x_i,x_j)
\mid
(t,z_i,x_i)\in \operatorname{conv}(\mathcal S_i),
\ (t,z_j,x_j)\in \operatorname{conv}(\mathcal S_j)
\}.
\label{eq:sec5-Ge}
\end{equation}

\begin{theorem}[exact decomposition of the two-unit edge hull]
\label{thm:sec5-edge-exact}
Under $q_e\le 1$ and \eqref{eq:sec5-generic-unstable},
\begin{equation}
\operatorname{conv}(\mathcal S_e)
=
G_e
=
\operatorname{conv}\bigl(\{-1\}\times C_i^-\times C_j^-\ \cup\ \{1\}\times C_i^+\times C_j^+\bigr).
\label{eq:sec5-edge-exact}
\end{equation}
\end{theorem}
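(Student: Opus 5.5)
We prove \eqref{eq:sec5-edge-exact} through a cycle of inclusions. Write
\[
R_e:=\operatorname{conv}\bigl(\{-1\}\times C_i^-\times C_j^-\ \cup\ \{1\}\times C_i^+\times C_j^+\bigr).
\]
The plan is to show $R_e\subset\operatorname{conv}(\mathcal S_e)\subset G_e\subset R_e$. The whole argument rests on Lemma~\ref{lem:sec5-oneunit-endpoint} and on one structural fact: the two units are coupled \emph{only} through the common scalar $t$. This is exactly what $q_e\le 1$ provides through the normal form \eqref{eq:sec5-normal-form}.

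\emph{First inclusion, $R_e\subset\operatorname{conv}(\mathcal S_e)$.} Fix $t=-1$. The slice of $\mathcal S_e$ at $t=-1$ is the Cartesian product of the slice of $\mathcal S_i$ and the slice of $\mathcal S_j$ at $t=-1$. Each factor is the graph of $\relu$ over the interval $[L_h(-1),U_h(-1)]$. Its convex hull is $C_h^-$: it is contained in $C_h^-$ because the graph is the two segments $[L_h(-1),0]\times\{0\}$ and $\{(z,z)\mid 0\le z\le U_h(-1)\}$, and both lie in $C_h^-$; conversely, the three vertices of $C_h^-$ lie on the graph. For instance, $(0,0)$ lies on it because $L_h(-1)<0<U_h(-1)$ by \eqref{eq:sec5-generic-unstable}. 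The convex hull of a product is the product of the convex hulls. Hence $\{-1\}\times C_i^-\times C_j^-\subset\operatorname{conv}(\mathcal S_e)$, and the same argument gives the analogous inclusion at $t=1$. Taking convex hulls yields $R_e\subset\operatorname{conv}(\mathcal S_e)$.

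\emph{Second inclusion, $\operatorname{conv}(\mathcal S_e)\subset G_e$.} This is immediate. $G_e$ contains $\mathcal S_e$ by definition \eqref{eq:sec5-Ge}. It is also convex, being the intersection of two convex sets, each the preimage of $\operatorname{conv}(\mathcal S_h)$ under a coordinate projection.

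\emph{Third inclusion, $G_e\subset R_e$.} This is the substantive step. Let $(t,z_i,z_j,x_i,x_j)\in G_e$. By Lemma~\ref{lem:sec5-oneunit-endpoint}, $(t,z_h,x_h)$ lies in the convex hull of $\{-1\}\times C_h^-$ and $\{1\}\times C_h^+$. Both of these sets are convex, so every point of the hull of their union can be written as
\[
(t,z_h,x_h)=\lambda_h(-1,c_h^-)+(1-\lambda_h)(1,c_h^+),\qquad c_h^\pm\in C_h^\pm .
\]
Reading off the first coordinate gives $t=1-2\lambda_h$, that is, $\lambda_h=(1-t)/2$. This weight is determined by $t$ alone, so $\lambda_i=\lambda_j=:\lambda$. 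This is the crucial point: the common shared scalar forces the same mixing weight in both units.

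We then reassemble:
\[
(t,z_i,z_j,x_i,x_j)=\lambda\,(-1,c_i^-,c_j^-)+(1-\lambda)\,(1,c_i^+,c_j^+)\in R_e .
\]
When $t=\pm1$, one weight is zero. The corresponding $c_h^\mp$ can then be chosen arbitrarily in $C_h^\mp$, which is nonempty, so the decomposition still holds.

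I expect the main obstacle to be stating precisely the claim that the convex hull of the union of two convex sets lying in the parallel hyperplanes $t=-1$ and $t=1$ consists exactly of such two-point combinations, with the weight fixed by $t$. I would prove it directly: a general convex combination of finitely many points from the two slices can be regrouped into its $t=-1$ part and its $t=1$ part, and each part averages to a point of $C_h^-$ or $C_h^+$ respectively by convexity. Condition \eqref{eq:sec5-generic-unstable} is used only to identify the endpoint slice hulls with the three-vertex polytopes $C_h^\pm$. In boundary cases with equality, the same argument goes through with a degenerate triangle.
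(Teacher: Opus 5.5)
Your proof is correct and follows essentially the same route as the paper. You prove $\operatorname{conv}(\mathcal S_e)\subset G_e$ by convexity, $G_e\subset R_e$ via Lemma~\ref{lem:sec5-oneunit-endpoint} with the common weight $\lambda=(1-t)/2$ forced by the shared scalar, and $R_e\subset\operatorname{conv}(\mathcal S_e)$ because the endpoint slices of $\mathcal S_e$ are products whose hulls are $C_i^\pm\times C_j^\pm$; your single cycle of inclusions skips the paper's separate check $R_e\subset G_e$, and your regrouping argument supplies the two-point representation that the paper takes from Lemma~\ref{lem:sec5-oneunit-endpoint} without comment.
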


\begin{proof}
First, $\mathcal S_e\subset G_e$ and $G_e$ is convex, so
\[
\operatorname{conv}(\mathcal S_e)\subset G_e.
\]

Conversely, take any
\[
y=(t,z_i,z_j,x_i,x_j)\in G_e.
\]
By Lemma~\ref{lem:sec5-oneunit-endpoint}, with $\lambda=(1-t)/2\in[0,1]$, there exist $a_i^-\in C_i^-$, $a_i^+\in C_i^+$, $a_j^-\in C_j^-$, and $a_j^+\in C_j^+$ such that
\[
(t,z_i,x_i)=\lambda(-1,a_i^-)+(1-\lambda)(1,a_i^+),
\]
\[
(t,z_j,x_j)=\lambda(-1,a_j^-)+(1-\lambda)(1,a_j^+).
\]
Hence,
\[
y
=
\lambda(-1,a_i^-,a_j^-)
+
(1-\lambda)(1,a_i^+,a_j^+),
\]
and therefore,
\[
G_e
\subset
\operatorname{conv}\bigl(\{-1\}\times C_i^-\times C_j^-\cup\{1\}\times C_i^+\times C_j^+\bigr).
\]

On the other hand, both endpoint product slices on the right-hand side are contained in $G_e$.
 By convexity of $G_e$, the reverse inclusion also holds. Thus
\[
G_e
=
\operatorname{conv}\bigl(\{-1\}\times C_i^-\times C_j^-\cup\{1\}\times C_i^+\times C_j^+\bigr).
\]

Finally, when $t=\pm 1$ is fixed, the degrees of freedom on the $i$-side and the $j$-side are independent. Therefore,
\[
\{-1\}\times C_i^-\times C_j^-=
\operatorname{conv}(\mathcal S_e\cap\{t=-1\}),
\]
\[
\{1\}\times C_i^+\times C_j^+=
\operatorname{conv}(\mathcal S_e\cap\{t=1\}),
\]
and hence the right-hand side is contained in $\operatorname{conv}(\mathcal S_e)$. Together with the first inclusion,
it proves \eqref{eq:sec5-edge-exact}.
\end{proof}

Theorem~\ref{thm:sec5-edge-exact} shows that the convex hull of the entire edge block is described exactly as a convex
combination of the two endpoint product slices at the two ends of the shared scalar. Consequently, the required two-unit analysis
is reduced not to a direct description of mixed branches, but to an exact coupling of one-unit interval hulls at the endpoints.
This representation immediately yields a linear extended formulation.

\begin{corollary}[linear extended formulation]
\label{cor:sec5-linear-ef}
The exact hull of Theorem~\ref{thm:sec5-edge-exact} can be represented linearly by using nonnegative variables $\lambda_-,\lambda_+$ and scaled endpoint variables
\[
(\hat z_h^-,\hat x_h^-),\ (\hat z_h^+,\hat x_h^+)
\qquad (h=i,j)
\]
as follows:
\begin{align}
\lambda_-+\lambda_+&=1,
&
 t&=-\lambda_-+\lambda_+,
\label{eq:sec5-ef-lambda}\\
z_h&=\hat z_h^-+\hat z_h^+,
&
 x_h&=\hat x_h^-+\hat x_h^+
\qquad (h=i,j),
\label{eq:sec5-ef-gluing}\\
(\hat z_h^-,\hat x_h^-)&\in \lambda_- C_h^-,
&
(\hat z_h^+,\hat x_h^+)&\in \lambda_+ C_h^+
\qquad (h=i,j).
\label{eq:sec5-ef-triangle}
\end{align}
This is the two-term specialization of the convex-hull formulation for a finite union of polyhedra in disjunctive programming.
Here, $\lambda C_h^{\pm}$ denotes the usual perspective scaling of the polytope $C_h^{\pm}$, which is described by linear inequalities \citep{Balas1979}.
\end{corollary}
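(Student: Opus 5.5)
The plan is to verify the extended formulation by projection. Let $P$ be the polyhedron in the variables $(t,z_i,z_j,x_i,x_j,\lambda_\pm,\hat z_h^\pm,\hat x_h^\pm)$ defined by \eqref{eq:sec5-ef-lambda}--\eqref{eq:sec5-ef-triangle}, and let $\pi$ be its projection onto $(t,z_i,z_j,x_i,x_j)$. I will show that $\pi(P)$ equals the right-hand side of \eqref{eq:sec5-edge-exact}. By Theorem~\ref{thm:sec5-edge-exact}, that set is $\operatorname{conv}(\mathcal S_e)$.

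\emph{Inclusion $\supseteq$.} Take a point $y=\lambda(-1,a_i^-,a_j^-)+(1-\lambda)(1,a_i^+,a_j^+)$ with $a_h^\pm\in C_h^\pm$ and $\lambda\in[0,1]$. Set $\lambda_-:=\lambda$, $\lambda_+:=1-\lambda$, and $(\hat z_h^\pm,\hat x_h^\pm):=\lambda_\pm a_h^\pm$. Then \eqref{eq:sec5-ef-lambda}--\eqref{eq:sec5-ef-triangle} hold directly. Every point of the convex hull of the union has this form: points with a common $t$-coordinate $\pm1$ can be merged because $C_i^\pm\times C_j^\pm$ is convex. So the inclusion holds.

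\emph{Inclusion $\subseteq$.} Take a feasible point of $P$. If $\lambda_\pm>0$, set $a_h^\pm:=(\hat z_h^\pm,\hat x_h^\pm)/\lambda_\pm\in C_h^\pm$; the formulas then reproduce the convex combination. The main obstacle is the boundary case $\lambda_-=0$ or $\lambda_+=0$. There one needs the convention that $0\cdot C_h^{\pm}=\{0\}$, which is the recession cone of the bounded polytope $C_h^\pm$. With this convention the hatted variables attached to the zero weight vanish, and $y$ lies in a single endpoint slice.

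To make this rigorous I will write each $C_h^\pm$ explicitly as $\{(z,x): x\ge0,\ x\ge z,\ \text{one facet inequality through }(L,0),(U,U)\}$. Here $L=L_h(\pm1)<0<U=U_h(\pm1)$ by \eqref{eq:sec5-generic-unstable}, so $C_h^\pm$ is a genuine triangle. The perspective $\lambda C_h^\pm$ is obtained by homogenizing the facet inequality, i.e.\ multiplying its right-hand side by $\lambda$. One then checks that at $\lambda=0$ the homogenized system forces $(\hat z,\hat x)=0$. This is exactly where boundedness matters, and it is the standard Balas argument \citep{Balas1979}.

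Finally, I will note that all constraints are linear in the lifted variables, so $P$ is a polyhedron. Hence the exact two-unit hull is the linear projection of a polyhedron, which is the claim.
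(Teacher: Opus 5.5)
Your proposal is correct and follows essentially the same route as the paper: you lift a hull point through the two-endpoint convex combination with scaled variables, and conversely rescale by $\lambda_\sigma^{-1}$ when $\lambda_\sigma>0$, using the fact that the hatted variables vanish when $\lambda_\sigma=0$. Your explicit check that the homogenized triangle inequalities force $(\hat z,\hat x)=0$ at $\lambda=0$ (the recession-cone argument) makes rigorous a step that the paper simply takes as the convention for perspective scaling.
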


\begin{proof}
By Theorem~\ref{thm:sec5-edge-exact}, every point in the exact hull can be written as a convex combination of one point from the endpoint slice at $t=-1$
and one point from the endpoint slice at $t=1$. Thus, there exist $\lambda_-,\lambda_+\ge 0$ with $\lambda_-+\lambda_+=1$, and points
\[
(z_h^-,x_h^-)\in C_h^-,
\qquad
(z_h^+,x_h^+)\in C_h^+
\qquad (h=i,j),
\]
such that
\[
(t,z_i,z_j,x_i,x_j)
=
\lambda_-(-1,z_i^-,z_j^-,x_i^-,x_j^-)
+
\lambda_+(1,z_i^+,z_j^+,x_i^+,x_j^+).
\]
Equating coordinates gives $t=-\lambda_-+\lambda_+$ and, for $h=i,j$,
\[
z_h=\lambda_-z_h^-+\lambda_+z_h^+,
\qquad
x_h=\lambda_-x_h^-+\lambda_+x_h^+.
\]
Introduce the scaled endpoint variables
\[
(\hat z_h^-,\hat x_h^-):=\lambda_-(z_h^-,x_h^-),
\qquad
(\hat z_h^+,\hat x_h^+):=\lambda_+(z_h^+,x_h^+).
\]
Then the preceding coordinate equations become \eqref{eq:sec5-ef-lambda}--\eqref{eq:sec5-ef-gluing},
while the endpoint membership conditions become \eqref{eq:sec5-ef-triangle}.

Conversely, suppose that variables satisfying \eqref{eq:sec5-ef-lambda}--\eqref{eq:sec5-ef-triangle} are given. If $\lambda_\sigma>0$ for $\sigma\in\{-,+\}$, then
\[
(z_h^\sigma,x_h^\sigma):=\lambda_\sigma^{-1}(\hat z_h^\sigma,\hat x_h^\sigma)
\in C_h^\sigma.
\]
If $\lambda_\sigma=0$, the condition $(\hat z_h^\sigma,\hat x_h^\sigma)\in \lambda_\sigma C_h^\sigma$ means $(\hat z_h^\sigma,\hat x_h^\sigma)=(0,0)$, and the corresponding endpoint term has zero weight.
Therefore, any point of $C_h^\sigma$ may be chosen for $(z_h^\sigma,x_h^\sigma)$. Using the equalities in the formulation, the resulting point $(t,z_i,z_j,x_i,x_j)$ belongs to
\[
\operatorname{conv}\left(
\{-1\}\times C_i^-\times C_j^-
\cup
\{1\}\times C_i^+\times C_j^+
\right),
\]
which is $\operatorname{conv}(\mathcal S_e)$ by Theorem~\ref{thm:sec5-edge-exact}. Since each $C_h^\sigma$ is a polytope, the scaled membership condition $(\hat z_h^\sigma,\hat x_h^\sigma)\in\lambda_\sigma C_h^\sigma$ is described by linear inequalities.
Hence, the formulation is linear and exact.
\end{proof}

\subsection{Reduction to an order-\texorpdfstring{$2$}{2} local certificate for the SOS relaxation}

We have now obtained a geometric description of the two-unit edge hull in reduced coordinates.
It remains to convert the polyhedral description into an order-$2$ local certificate.
In this subsection, we show that the conversion can be performed at relaxation order $2$.
The proof has three steps: (i) applying the one-unit result of Azuma et al.~\cite{AKY2026}
 to the reduced one-unit problem, (ii) using a Farkas representation
 for affine optimization over the reduced edge hull, and (iii) lifting the resulting certificate back to the original edge block by affine elimination.

For each $h\in\{i,j\}$, define the reduced one-unit set, with the private scalar 
included explicitly, by
\begin{equation}
\widehat{\mathcal S}_h
:=
\{(t,p_h,z_h,x_h)\mid t\in[-1,1],\ p_h\in[A_h,B_h],\ z_h=p_h+\alpha_h t,\ x_h=\relu(z_h)\}.
\label{eq:sec5-Sh-hat}
\end{equation}
Its projection onto $(t,z_h,x_h)$ is exactly $\mathcal S_h$ in \eqref{eq:sec5-Sh}. For $\sigma\in\{-,+\}$, write
\[
L_h^\sigma:=L_h(\sigma 1),
\qquad
U_h^\sigma:=U_h(\sigma 1).
\]
In the nondegenerate case of Lemma~\ref{lem:sec5-regularity-endpoints}, one has
\[
L_h^\sigma<0<U_h^\sigma
\qquad (h=i,j,\ \sigma\in\{-,+\}).
\]

\begin{lemma}[explicit linear description of the endpoint triangles]
\label{lem:sec5-endpoint-triangle-hrep}
For each $h\in\{i,j\}$ and each $\sigma\in\{-,+\}$, the endpoint triangle $C_h^\sigma$ can be written as
\begin{equation}
C_h^\sigma
=
\left\{(z_h,x_h)\in\R^2\ \middle|\ x_h\ge 0,\ x_h-z_h\ge 0,\ x_h\le \frac{U_h^\sigma}{U_h^\sigma-L_h^\sigma}(z_h-L_h^\sigma)\right\}.
\label{eq:sec5-endpoint-triangle-hrep}
\end{equation}
\end{lemma}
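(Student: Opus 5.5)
We prove the identity \eqref{eq:sec5-endpoint-triangle-hrep} by a direct vertex/facet argument for a triangle in the plane. Fix $h$ and $\sigma$, abbreviate $L:=L_h^\sigma<0<U:=U_h^\sigma$, and let $D$ denote the right-hand side of \eqref{eq:sec5-endpoint-triangle-hrep}. The strict inequalities $L<0<U$ from the nondegenerate case of Lemma~\ref{lem:sec5-regularity-endpoints} guarantee that the three points $(L,0)$, $(0,0)$, $(U,U)$ are affinely independent. Hence $C_h^\sigma$ is a genuine two-dimensional triangle, and it suffices to identify its three edge-defining inequalities.

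\textbf{Step 1 (each vertex satisfies all three inequalities, so $C_h^\sigma\subset D$).}
\begin{itemize}
\item At $(L,0)$: we have $x=0$ and $x-z=-L>0$, and the third inequality reads $0\le 0$.
\item At $(0,0)$: the first two hold with equality, and the third reads $0\le \frac{U}{U-L}(-L)$, which is true.
\item At $(U,U)$: we have $x=U>0$ and $x-z=0$, and the third reads $U\le \frac{U}{U-L}(U-L)=U$.
\end{itemize}
Since $D$ is an intersection of half-planes, it is convex, so $C_h^\sigma\subset D$.

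\textbf{Step 2 ($D\subset C_h^\sigma$).} We show $D$ is bounded and that its vertices are exactly the three points above. The pairwise intersections of the three boundary lines are as follows.
\begin{itemize}
\item $x=0$ and $x=z$ meet at $(0,0)$.
\item $x=0$ and the chord $x=\frac{U}{U-L}(z-L)$ meet at $(L,0)$.
\item $x=z$ and the chord meet where $z(U-L)=U(z-L)$, i.e.\ $z=U$, giving $(U,U)$.
\end{itemize}
So the three lines are pairwise nonparallel; their slopes are $0$, $1$ and $\frac{U}{U-L}\in(0,1)$. They therefore bound a unique triangle. By Step 1 the three intersection points lie in $D$, and $D$ is contained in that triangle. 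Alternatively, one can check directly that any $(z,x)\in D$ is a convex combination of the three vertices. A cleaner route is to write $(z,x)=\mu_1(L,0)+\mu_2(0,0)+\mu_3(U,U)$ and solve: $\mu_3=x/U$ and $\mu_1=(x-z)/(-L)$. Then $\mu_1,\mu_3\ge0$ come from the first two inequalities, and $\mu_2=1-\mu_1-\mu_3\ge0$ rearranges exactly to the third inequality.

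The only step needing care is this last computation. One must verify that $1-\frac{x}{U}+\frac{x-z}{L}\ge0$ is equivalent, after multiplying by $UL<0$ (which flips the inequality) and rearranging, to $x(U-L)\le U(z-L)$. I expect this sign bookkeeping to be the main, though minor, obstacle. The barycentric-coordinate argument settles both inclusions at once, so I would present that version.
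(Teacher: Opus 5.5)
Your proof is correct and follows essentially the same approach as the paper: the paper identifies the three edge lines of the triangle through pairs of the vertices $(L_h^\sigma,0)$, $(0,0)$, $(U_h^\sigma,U_h^\sigma)$ and takes the corresponding half-planes. Your barycentric computation, in which each of $\mu_3=x/U$, $\mu_1=(x-z)/(-L)$ and $\mu_2$ is a positive multiple of one of the three defining affine functions, additionally makes explicit which side of each line the triangle lies on and where $L_h^\sigma<0<U_h^\sigma$ is needed, points the paper leaves implicit.
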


\begin{proof}
Since $C_h^\sigma=\operatorname{conv}\{(L_h^\sigma,0),(0,0),(U_h^\sigma,U_h^\sigma)\}$ is a triangle, it suffices to write the half-spaces corresponding to its three edges. The line through $(L_h^\sigma,0)$ and $(0,0)$ results in
 $x_h=0$, and the line through $(0,0)$ and $(U_h^\sigma,U_h^\sigma)$ leads to $x_h=z_h$.
 The remaining edge through $(L_h^\sigma,0)$ and $(U_h^\sigma,U_h^\sigma)$ has equation
\[
x_h=\frac{U_h^\sigma}{U_h^\sigma-L_h^\sigma}(z_h-L_h^\sigma).
\]
Since the interior of the triangle is the intersection of these half-spaces, we obtain \eqref{eq:sec5-endpoint-triangle-hrep}.
\end{proof}

Thus, at each endpoint $t=\pm 1$, the last inequality in
\eqref{eq:sec5-endpoint-triangle-hrep} is the nontrivial facet of the interval
ReLU hull, as used in convex ReLU formulations
\citep{Ehlers2017,AndersonHuchetteMaTjandraatmadjaVielma2020}.
We use affine inequalities of this type on the one-unit side to represent the optimality gap over the reduced two-unit edge hull.

\begin{lemma}[the reduced one-unit problem falls within the scope of the one-unit result of Azuma et al.~\cite{AKY2026}]
\label{lem:sec5-reduced-azuma}
For each $h\in\{i,j\}$, the set $\widehat{\mathcal S}_h$ in \eqref{eq:sec5-Sh-hat}
corresponds to a single-neuron ReLU problem with input variables $(t,p_h)$ ranging over the rectangle
\[
[-1,1]\times [A_h,B_h].
\]
Therefore, the one-unit tightness result of Azuma et al.~\cite{AKY2026} described in Section~\ref{sec:prelim} applies. In particular, for any affine inequality
\[
\varphi_h(t,z_h,x_h)\ge 0
\]
valid on $\operatorname{conv}(\mathcal S_h)$, we have
\begin{equation}
\varphi_h(t,z_h,x_h)
\in
\qmod_2(g_h^{\mathrm{red}})+\ideal_2(h_h^{\mathrm{red}}),
\label{eq:sec5-reduced-azuma-cert}
\end{equation}
where $(g_h^{\mathrm{red}},h_h^{\mathrm{red}})$ is the family of box constraints and linear equalities
that define the reduced one-unit block. Equivalently,
the one-unit result imported in this section provides order-$2$ certificates without requiring higher-order relaxations.
\end{lemma}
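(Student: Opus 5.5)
The plan is to verify that the reduced one-unit block is literally an instance of the rectangular-input setting \textbf{(A3)}, and then to transfer the imported tightness statement into the certificate language of Section~\ref{sec:prelim}.

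\emph{Step 1: identify the data.} Set $m=2$ and $u=(t,p_h)$, with box $[-1,1]\times[A_h,B_h]$. The pre-activation is $z_h=w^\top u+b$ with $w=(\alpha_h,1)^\top$ and $b=0$. The reduced generators are
\[
g_h^{\mathrm{red}}=\{t+1,\ 1-t,\ p_h-A_h,\ B_h-p_h,\ x_h,\ x_h-z_h\}
\]
and
\[
h_h^{\mathrm{red}}=\{z_h-p_h-\alpha_h t,\ x_h(x_h-z_h)\}.
\]
This is exactly the semialgebraic system \eqref{eq:reduced-semialgebraic-system-sec3} for one unit over a box, so \textbf{(A3)} applies. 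The nondegeneracy $L_h^\sigma<0<U_h^\sigma$ guarantees the unit is undecided. If the cited result instead needs only $\min z_h<0<\max z_h$, that follows as well.

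\emph{Step 2: pass from tightness of optimal values to certificates for arbitrary valid affine inequalities.} Let $\varphi_h(t,z_h,x_h)\ge 0$ be affine and valid on $\operatorname{conv}(\mathcal S_h)$. Since $\widehat{\mathcal S}_h$ projects onto $\mathcal S_h$, $\varphi_h$ is also valid on $\widehat{\mathcal S}_h$. Set $\mu:=\min_{\widehat{\mathcal S}_h}\varphi_h\ge 0$, which is attained because the set is compact. Read the one-unit theorem of \cite{AKY2026} as saying that for every linear objective the order-$2$ value equals the true minimum, i.e.\ $\gamma^{(2)}=\gamma^*$ in \eqref{eq:order-omega-bound}. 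Then $\varphi_h-\mu+\epsilon\in\qmod_2(g_h^{\mathrm{red}})+\ideal_2(h_h^{\mathrm{red}})$ for every $\epsilon>0$.

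To remove the $\epsilon$, I will use closedness of the truncated cone. The feasible set contains the box in $(t,p_h)$, which has nonempty interior, and the Gram-matrix parametrization modulo the linear ideal has bounded level sets. Closedness then follows by the standard argument for Archimedean, interior-feasible truncations. Alternatively, if \cite{AKY2026} states attainment of the SOS optimum directly, I will cite that instead. Either way $\varphi_h-\mu$ lies in the cone. Since $\mu\ge 0$ is a constant and constants are SOS of degree $0$, $\varphi_h$ lies in it too, which gives \eqref{eq:sec5-reduced-azuma-cert}.

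\emph{Main obstacle.} The difficult point is exactly this last passage: the cited result gives equality of optimal values, not membership with a zero gap. I would first look for an explicit certificate in \cite{AKY2026}, since an explicit degree-$2$ certificate makes attainment immediate. Failing that, I would argue closedness of $\qmod_2+\ideal_2$ using the ball/box constraints $1-t^2\ge 0$ and $(p_h-A_h)(B_h-p_h)\ge 0$. These are obtainable as products of the linear box generators at order $2$, and they bound the Gram matrices on bounded coefficient sets.
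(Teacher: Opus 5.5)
Your Step 1 and the outline of Step 2 follow the paper's proof. You view $\widehat{\mathcal S}_h$ as an instance of \textbf{(A3)} with input $(t,p_h)$, pull $\varphi_h$ back, note that its minimum $\widehat m_h$ is nonnegative, apply the one-unit result to $\widehat\varphi_h-\widehat m_h$, and add $\widehat m_h$ back as a constant SOS. The paper takes the cited result to give the exact membership $\widehat\varphi_h-\widehat m_h\in\qmod_2(\widehat g_h)+\ideal_2(\widehat h_h)$ directly, so it has no $\epsilon$ to remove. (Its $\widehat g_h,\widehat h_h$ are your generators containing $p_h$.) You are right that \eqref{eq:order-omega-bound} is only an infimum. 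However, the closedness argument you propose for removing $\epsilon$ fails as written, for two reasons.

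\emph{No interior.} The feasible set is not full-dimensional even after eliminating $z_h$. It lies in the union of the planes $x_h=0$ and $x_h=z_h$ inside $z_h=p_h+\alpha_h t$; only its projection to $(t,p_h)$ is a box. So the nonempty-interior closedness theorem does not apply.

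\emph{Unbounded Gram data.} The Gram parametrization modulo the linear equality does not have bounded level sets. The identity
\[
k\,(x_h-z_h)^2\cdot x_h-k\,(x_h-z_h)\cdot x_h(x_h-z_h)=0\qquad(k\ge 0)
\]
is an order-$2$ representation of $0$ whose SOS multiplier on the generator $x_h$ is unbounded. The products $1-t^2$ and $(p_h-A_h)(B_h-p_h)$ cannot fix this, because the unboundedness comes from the complementarity ideal.

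To make the $\epsilon$-removal rigorous, work modulo the full ideal. Under $L_h^\sigma<0<U_h^\sigma$ the feasible set has nonempty relative interior in each of the two planes. Hence its vanishing ideal is the ideal generated by $\widehat h_h$. Since these generators are homogeneous, every polynomial of degree at most $4$ that vanishes on the feasible set lies in $\ideal_2(\widehat h_h)$. Now integrate against a measure with full support on the feasible set. This bounds each Gram matrix once you discard the directions that vanish where the corresponding generator is positive. Besides multiples of $z_h-p_h-\alpha_h t$, these are $x_h-z_h$ for the multiplier of $x_h$, $x_h$ for the multiplier of $x_h-z_h$, and $x_h^2-x_hz_h$ for $\sigma_0$. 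Everything discarded lands in $\ideal_2(\widehat h_h)$.

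Separately, the paper finishes with the substitution $p_h=z_h-\alpha_h t$ (Lemma~\ref{lem:sec5-affine-elimination}(i)). This turns $p_h-A_h$ and $B_h-p_h$ into $z_h-L_h(t)$ and $U_h(t)-z_h$, and puts the certificate in the variables $(t,z_h,x_h)$ used in Lemma~\ref{lem:sec5-variable-extension}. Your version keeps $p_h$, so you should add this step.
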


\begin{proof}
The set $\widehat{\mathcal S}_h$ is a one-unit problem with input $(t,p_h)$, pre-activation $z_h$, and output $x_h$, and the input set is the rectangle $[-1,1]\times [A_h,B_h]$. Thus, it is a special case of the rectangular-input one-unit setting of Azuma et al.~\cite{AKY2026}.

Assume that $\varphi_h(t,z_h,x_h)\ge 0$ is valid on $\operatorname{conv}(\mathcal S_h)$. Its pullback to $\widehat{\mathcal S}_h$ is
the affine function
\[
\widehat\varphi_h(t,p_h,z_h,x_h):=\varphi_h(t,z_h,x_h),
\]
which is valid
on $\widehat{\mathcal S}_h$. Let
\[
\widehat m_h:=\min\{\widehat\varphi_h(t,p_h,z_h,x_h)\mid (t,p_h,z_h,x_h)\in \widehat{\mathcal S}_h\}.
\]
Then $\widehat m_h\ge 0$. By the one-unit tightness result of Azuma et al.~\cite{AKY2026} applied to the affine objective $\widehat\varphi_h$,
we have 
\[
\widehat\varphi_h-\widehat m_h
\in
\qmod_2(\widehat g_h)+\ideal_2(\widehat h_h).
\]
Since $\widehat m_h$ is a nonnegative constant, it belongs to the quadratic module as an SOS polynomial. Therefore,
\[
\widehat\varphi_h
\in
\qmod_2(\widehat g_h)+\ideal_2(\widehat h_h).
\]
Finally, applying Lemma~\ref{lem:sec5-affine-elimination}(i) to the affine substitution $p_h=z_h-\alpha_h t$ transfers this certificate
 to \eqref{eq:sec5-reduced-azuma-cert} for the reduced block $(g_h^{\mathrm{red}},h_h^{\mathrm{red}})$.
\end{proof}

Pullbacks also play a role in positivity certificates for polynomial optimization
\citep{LasserrePutinar2010}.
Here, however, the required maps are affine, and therefore preserve SOS structure without increasing the degree.

\begin{lemma}[affine inequalities valid on a box have order-$2$ box certificates]
\label{lem:sec5-box-affine-certificate}
Let
\[
B=[\underline u_1,\bar u_1]\times\cdots\times[\underline u_m,\bar u_m]
\]
and let
\[
g_B:=\{u_k-\underline u_k,\ \bar u_k-u_k\mid k=1,\dots,m\}
\]
be its box generators. If an affine polynomial
\[
q(u)=q_0+\sum_{k=1}^m a_k u_k
\]
is nonnegative on $B$, then
\[
q\in \qmod_2(g_B).
\]
\end{lemma}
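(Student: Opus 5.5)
The plan is to write $q$ explicitly as a nonnegative combination of the box generators plus a nonnegative constant. Such an expression lies in $\qmod_2(g_B)$: each multiplier is a nonnegative constant, hence an SOS of degree $0$, and each product $\sigma_\ell g_\ell$ has degree at most $1\le 4$.

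\emph{Step 1 (the minimum is attained at a vertex).} An affine function on a box attains its minimum at a vertex. For each coordinate $k$, choose $u_k^\ast=\underline u_k$ if $a_k\ge 0$ and $u_k^\ast=\bar u_k$ if $a_k<0$. Then the minimum value is $q(u^\ast)=q_0+\sum_k a_k u_k^\ast$, and $q(u^\ast)\ge 0$ because $q\ge 0$ on $B$.

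\emph{Step 2 (the identity).} For $a_k\ge 0$ write $a_k u_k=a_k(u_k-\underline u_k)+a_k\underline u_k$. For $a_k<0$ write $a_k u_k=|a_k|(\bar u_k-u_k)+a_k\bar u_k$. Summing over $k$ gives
\[
q(u)=q(u^\ast)+\sum_{k:\,a_k\ge 0}a_k(u_k-\underline u_k)+\sum_{k:\,a_k<0}|a_k|(\bar u_k-u_k).
\]
This is a polynomial identity. Every coefficient is nonnegative, and the constant $q(u^\ast)\ge0$ is the square of $\sqrt{q(u^\ast)}$.

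\emph{Step 3 (membership).} Take $\sigma_0:=q(u^\ast)$ and let the multipliers of $u_k-\underline u_k$ and $\bar u_k-u_k$ be the constants $\max\{a_k,0\}$ and $\max\{-a_k,0\}$ respectively. All degree bounds in \eqref{eq:truncated-qmodule} hold for $\omega=2$, indeed already for $\omega=1$. Hence $q\in\qmod_2(g_B)$.

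Nothing here is a real obstacle. The only point to check is the degenerate case $\underline u_k=\bar u_k$, where the identity and the choice of vertex remain valid unchanged. Alternatively, one could invoke Farkas' lemma on the polyhedral description of $B$, which gives the same linear certificate, but the explicit vertex construction avoids it.
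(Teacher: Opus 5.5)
Your proof is correct and matches the paper's argument: both choose the minimizing vertex coordinatewise by the sign of $a_k$, write $q$ as $q(u^\ast)$ plus a nonnegative combination of the box generators, and note that nonnegative constants are SOS multipliers. Your additional remark that the degree bounds already hold at order $1$ is also correct.
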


\begin{proof}
The minimum of $q$ on the box is attained at the vertex
\[
u_k^\star=
\begin{cases}
\underline u_k, & a_k\ge 0,\\
\bar u_k, & a_k<0.
\end{cases}
\]
Set $m_q:=q(u^\star)\ge 0$. Then
\[
q(u)
=
m_q
+
\sum_{a_k\ge 0} a_k\,(u_k-\underline u_k)
+
\sum_{a_k<0} (-a_k)\,(\bar u_k-u_k).
\]
Since $m_q$ is a nonnegative constant, it is an SOS polynomial. Moreover,
all coefficients multiplying the box generators are nonnegative constants.
Hence, this representation belongs to $\qmod_2(g_B)$.
\end{proof}

\begin{lemma}[affine pullback preserves order-$2$ certificates]
\label{lem:sec5-affine-elimination}
The following statements hold.
\begin{enumerate}[label=(\roman*)]
\item For each $h\in\{i,j\}$, consider the affine substitution
\[
\Phi_h:\R[t,p_h,z_h,x_h]\to \R[t,z_h,x_h],
\qquad
(\Phi_h q)(t,z_h,x_h):=q(t,z_h-\alpha_h t,z_h,x_h).
\]
If $\widehat{\psi}(t,p_h,z_h,x_h)\in \qmod_2(\widehat g_h)+\ideal_2(\widehat h_h)$, then
\[
\psi:=\Phi_h(\widehat\psi)
\in
\qmod_2(g_h^{\mathrm{red}})+\ideal_2(h_h^{\mathrm{red}})
\]
holds.
\item Let $\pi_e$ be the affine map from the original edge-local variables $\zeta_e$ to the reduced variables $\xi=(t,z_i,x_i,z_j,x_j)$, and let its pullback be
\[
\Pi_e:\R[\xi]\to \R[\zeta_e],
\qquad
(\Pi_e q)(\zeta_e):=q(\pi_e(\zeta_e)).
\]
If a reduced polynomial $\widetilde\psi(\xi)$ satisfies
\[
\widetilde\psi\in \qmod_2(g_e^{\mathrm{red}})+\ideal_2(h_e^{\mathrm{red}}),
\]
then its pullback satisfies
\[
\Pi_e(\widetilde\psi)
=
\widetilde\psi(\pi_e(\zeta_e))
\in
\qmod_2(g_e)+\ideal_2(h_e).
\]
\end{enumerate}
Moreover, in both (i) and (ii), the polynomial degree is preserved under the pullback. Hence,
the degree conditions associated with relaxation order $2$ remain valid.
\end{lemma}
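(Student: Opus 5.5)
The plan is to treat both parts through one elementary mechanism: composing every multiplier in a certificate with an affine map. Suppose $q\in\qmod_2(g)+\ideal_2(h)$ has the representation $q=\sigma_0+\sum_\ell\sigma_\ell g_\ell+\sum_r\tau_r h_r$. For an affine map $\phi$, composition gives
\[
q\circ\phi=\sigma_0\circ\phi+\sum_\ell(\sigma_\ell\circ\phi)(g_\ell\circ\phi)+\sum_r(\tau_r\circ\phi)(h_r\circ\phi).
\]
The representation transfers because of three facts. First, if $\sigma=\sum_k r_k^2$, then $\sigma\circ\phi=\sum_k(r_k\circ\phi)^2$ is again SOS. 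Second, $\deg(f\circ\phi)\le\deg f$ for every polynomial $f$, so each bound $\deg(\sigma_\ell g_\ell)\le 4$ and $\deg(\tau_rh_r)\le 4$ survives. Third, composition is a ring homomorphism, so products and sums are respected. What remains in each part is to check that the pulled-back generators $g_\ell\circ\phi$ and $h_r\circ\phi$ are, or can be rewritten in terms of, the generators of the target block.

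For (i), take $\phi(t,z_h,x_h)=(t,z_h-\alpha_h t,z_h,x_h)$. The generators in $(\widehat g_h,\widehat h_h)$ transform as follows:
\begin{itemize}
\item $t\pm1$ are unchanged;
\item $p_h-A_h$ and $B_h-p_h$ become $z_h-\alpha_h t-A_h=z_h-L_h(t)$ and $U_h(t)-z_h$;
\item $x_h$, $x_h-z_h$ and $x_h(x_h-z_h)$ are unchanged;
\item the linear equality $z_h-p_h-\alpha_h t$ becomes identically $0$, so its term drops out.
\end{itemize}
These are exactly the defining constraints of the reduced block $(g_h^{\mathrm{red}},h_h^{\mathrm{red}})$ of $\mathcal S_h$. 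The composed expression is therefore a certificate in the target class, and the degree bounds are preserved by the second fact above.

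For (ii), $\pi_e$ sends $\zeta_e$ to $t=(2r_e^\top u_{T_e}-\underline\tau_e-\overline\tau_e)/(\overline\tau_e-\underline\tau_e)$, keeps $x_h$, and sends $v_h$ to the rescaled pre-activation $z_h$. The rescaling is an additive shift by the absorbed constant, which I will track. Each reduced generator then pulls back as follows.
\begin{itemize}
\item \emph{ReLU inequalities and ideal generators.} $x_h$, $x_h-z_h$ and $x_h(x_h-z_h)$ pull back to $x_h$, $x_h-v_h$ and $x_h(x_h-v_h)$ modulo the linear equalities in $h_e$. Equivalently, $z_h\circ\pi_e$ equals $v_h$ plus a multiple of the defining affine equality, which lies in $\ideal_2(h_e)$ after multiplication by a multiplier of degree at most $3$.
\item \emph{Reduced box generators.} $t\pm1$ and the private-range generators $z_h-L_h(t)$, $U_h(t)-z_h$ pull back to affine functions of $u$ (modulo the affine equalities). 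These functions are nonnegative on the input box $\mathcal U_{T_i\cup T_e\cup T_j}$. Lemma~\ref{lem:sec5-box-affine-certificate} writes each one as a nonnegative constant plus a nonnegative combination of box generators in $g_e$. This places it in $\qmod_1(g_e)$ up to elements of $\ideal_1(h_e)$.
\end{itemize}

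The main obstacle is degrees in (ii). A reduced term $\sigma_\ell g_\ell^{\mathrm{red}}$ with $\deg\sigma_\ell\le 2$ becomes $(\sigma_\ell\circ\pi_e)\bigl(c_0+\sum c_k g_k+\text{ideal}\bigr)$. This expands into a sum of SOS multiples of the generators $g_k$, each still of degree at most $4$, together with ideal terms $(\sigma_\ell\circ\pi_e)\tau h$ of degree at most $2+1+1=4$. The expansion relies on two properties:
\begin{itemize}
\item the box-affine decomposition has constant nonnegative coefficients;
\item the equalities used in substitution are affine (degree $1$).
\end{itemize}
The quadratic ideal generator $x_h(x_h-z_h)$ needs particular care. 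The substitution $z_h\mapsto v_h+(\text{affine equality})$ produces an error $x_h\cdot(\text{affine equality})$, and multiplying it by a multiplier of degree at most $2$ keeps the total degree at most $4$. I will verify these degree counts term by term. Collecting all terms gives $\Pi_e(\widetilde\psi)\in\qmod_2(g_e)+\ideal_2(h_e)$.
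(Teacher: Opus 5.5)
Your proposal is correct and follows essentially the same argument as the paper. The shared mechanism is that an affine substitution is a ring homomorphism that preserves SOS and does not raise degree. In (i), the generators map onto the reduced generators and the linear equality vanishes identically. In (ii), the ReLU and complementarity generators map to the original ones, while the pulled-back affine box-type generators are rewritten via Lemma~\ref{lem:sec5-box-affine-certificate} as nonnegative constant combinations of the original box generators.

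Your version is somewhat more careful than the paper's on two points. You note that the pullbacks of $z_h-L_h(t)$ and $U_h(t)-z_h$ equal $p_h-A_h$ and $B_h-p_h$ only modulo the affine equalities in $h_e$. You also check that the resulting extra ideal terms stay within degree $4$.
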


\begin{proof}
(i) An affine substitution is a ring homomorphism. Hence if $\widehat\sigma=\sum_m r_m^2$ is SOS, then
\[
\Phi_h(\widehat\sigma)=\sum_m \bigl(r_m(t,z_h-\alpha_h t,z_h,x_h)\bigr)^2
\]
is again SOS. Since the map $p_h\mapsto z_h-\alpha_h t$ is affine,
it does not increase degree. Hence,
\[
\deg \Phi_h(q)\le \deg q
\]
for every polynomial $q$.

Now, write
\[
\widehat\psi
=
\widehat\sigma_0+
\sum_\ell \widehat\sigma_\ell \,\widehat g_{h,\ell}
+
\sum_r \widehat\tau_r \,\widehat h_{h,r}
\in
\qmod_2(\widehat g_h)+\ideal_2(\widehat h_h).
\]
Each $\widehat\sigma_\ell$ is SOS, and each term satisfies the degree constraints associated with relaxation order $2$.
After substituting $p_h=z_h-\alpha_h t$, we obtain
\[
\Phi_h(p_h-A_h)=z_h-L_h(t),
\qquad
\Phi_h(B_h-p_h)=U_h(t)-z_h.
\]
Thus, the box constraints associated with $t\in[-1,1]$ are preserved. Moreover, the linear equality
\[
z_h-p_h-\alpha_h t=0
\]
becomes identically zero after pullback. Therefore, $\Phi_h(\widehat\psi)$ belongs to the sum of the truncated quadratic module and ideal
associated with the reduced one-unit block $(g_h^{\mathrm{red}},h_h^{\mathrm{red}})$. Since no term increases in degree, we obtain
\[
\Phi_h(\widehat\psi)
\in
\qmod_2(g_h^{\mathrm{red}})+\ideal_2(h_h^{\mathrm{red}}).
\]

	(ii) Similarly, $\Pi_e$ is the ring homomorphism induced by the affine map $\pi_e$,
	and therefore sends SOS polynomials to SOS polynomials without increasing degree.
	It remains only to track the reduced generators under the pullback.  
	The ReLU generators $x_h$ and $x_h-z_h$ become the original generators $x_h$ and $x_h-v_h$, respectively,
	while the reduced complementarity equality $x_h(x_h-z_h)=0$ becomes the original equality $x_h(x_h-v_h)=0$.

	The remaining reduced generators are affine box-type bounds. The pullbacks of $1+t$ and $1-t$ are affine functions
	that are nonnegative on the shared input box by the normalization of $t$.
	Similarly, the pullbacks of $z_h-L_h(t)$ and $U_h(t)-z_h$ are $p_h-A_h$ and $B_h-p_h$, respectively,
	and these affine functions are nonnegative on the private input box by the definitions of $A_h$ and $B_h$.
	By Lemma~\ref{lem:sec5-box-affine-certificate}, each of these affine pullbacks belongs to the order-$2$
	quadratic module generated by the corresponding original box constraints. Therefore, every reduced
	inequality generator pulls back either to an original ReLU generator or to an element of the original box quadratic
	module, while every reduced equality generator pulls back to the original ideal.

	Thus if
	\[
	\widetilde\psi
	\in
	\qmod_2(g_e^{\mathrm{red}})+\ideal_2(h_e^{\mathrm{red}}),
	\]
	then applying $\Pi_e$ term by term maps it into $\qmod_2(g_e)+\ideal_2(h_e)$. The degree bounds are preserved
	since the pullback is affine and Lemma~\ref{lem:sec5-box-affine-certificate} represents each pulled-back affine box-type
	generator as a nonnegative constant combination of affine original box generators. Consequently,
	\[
	\Pi_e(\widetilde\psi)
	\in
\qmod_2(g_e)+\ideal_2(h_e).
\]
\end{proof}

\begin{lemma}[adding unused variables preserves order-$2$ certificates]
\label{lem:sec5-variable-extension}
For each $h\in\{i,j\}$, suppose
\[
\psi_h(t,z_h,x_h)
\in
\qmod_2(g_h^{\mathrm{red}})+\ideal_2(h_h^{\mathrm{red}}).
\]
Then, viewing $\psi_h$ as a polynomial in the reduced edge variables $(t,z_i,x_i,z_j,x_j)$, we also have
\[
\psi_h
\in
\qmod_2(g_e^{\mathrm{red}})+\ideal_2(h_e^{\mathrm{red}}).
\]
\end{lemma}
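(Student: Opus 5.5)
The approach is to check that every piece of the certificate for $\psi_h$ stays a legitimate piece of a certificate for the larger generator family. First we fix the generator families. For the reduced one-unit block, $g_h^{\mathrm{red}}$ consists of $1+t$, $1-t$, $z_h-L_h(t)$, $U_h(t)-z_h$, $x_h$, $x_h-z_h$, and $h_h^{\mathrm{red}}$ consists of $x_h(x_h-z_h)$. The reduced edge block $(g_e^{\mathrm{red}},h_e^{\mathrm{red}})$ is then the union of the two one-unit families for $h=i$ and $h=j$, with the shared generators $1\pm t$ appearing once. The key structural fact is therefore the inclusions
\[
g_h^{\mathrm{red}}\subseteq g_e^{\mathrm{red}},\qquad h_h^{\mathrm{red}}\subseteq h_e^{\mathrm{red}}\qquad (h=i,j),
\]
as polynomials in the larger ring $\R[t,z_i,x_i,z_j,x_j]$.

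Next we take a certificate
\[
\psi_h=\sigma_0+\sum_\ell \sigma_\ell\, g_{h,\ell}^{\mathrm{red}}+\sum_r \tau_r\, h_{h,r}^{\mathrm{red}},
\]
with $\sigma_\ell$ SOS in $\R[t,z_h,x_h]$ and degree bounds $\deg\sigma_0\le 4$, $\deg(\sigma_\ell g_{h,\ell}^{\mathrm{red}})\le 4$, $\deg(\tau_r h_{h,r}^{\mathrm{red}})\le 4$. We view this identity through the inclusion of polynomial rings $\R[t,z_h,x_h]\hookrightarrow\R[t,z_i,x_i,z_j,x_j]$. This inclusion is a ring homomorphism that preserves degree, so SOS polynomials stay SOS and each degree bound is unchanged. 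This is the trivial case of the affine-pullback argument of Lemma~\ref{lem:sec5-affine-elimination}, where the map is the coordinate projection $(t,z_i,x_i,z_j,x_j)\mapsto(t,z_h,x_h)$. We then assign zero multipliers to the generators of the other unit, and the resulting identity exhibits $\psi_h$ as an element of $\qmod_2(g_e^{\mathrm{red}})+\ideal_2(h_e^{\mathrm{red}})$.

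The only point needing care, and the expected obstacle, is the bookkeeping convention for the reduced edge generators. We must make sure $g_e^{\mathrm{red}}$ really contains the same box-type generators $z_h-L_h(t)$ and $U_h(t)-z_h$, in the same normalization of $t$, that appear in the one-unit family. If the edge family were written with different but positively proportional forms, we would instead rewrite each one-unit generator as a nonnegative constant multiple of an edge generator, or as a nonnegative combination in the style of Lemma~\ref{lem:sec5-box-affine-certificate}, absorbing the constants into the SOS multipliers. This keeps the degree at most $4$. Once this convention is in place, the statement follows immediately.
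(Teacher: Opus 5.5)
Your proposal is correct and matches the paper's proof: the one-unit generators appear verbatim among the reduced edge generators, so the same representation is a valid order-$2$ certificate for the edge block. You regard the multipliers as polynomials independent of $(z_{h'},x_{h'})$ and put zero multipliers on the other unit's generators, and the SOS structure and degree bounds are unchanged. Your fallback remark about positively rescaled generators is not needed under the paper's convention, but it does no harm.
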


\begin{proof}
Each generator of the reduced one-unit block appears verbatim as part of the generators of the reduced edge block. Therefore, any representation
\[
\psi_h
=
\sigma_0+
\sum_\ell \sigma_\ell g_{h,\ell}^{\mathrm{red}}
+
\sum_r \tau_r h_{h,r}^{\mathrm{red}}
\]
in $\qmod_2(g_h^{\mathrm{red}})+\ideal_2(h_h^{\mathrm{red}})$ remains a valid representation in
$\qmod_2(g_e^{\mathrm{red}})+\ideal_2(h_e^{\mathrm{red}})$,
after viewing the coefficient polynomials as independent of the unused variables $(z_{h'},x_{h'})$.
The SOS structure and the degree bounds are unchanged, which proves the claim.
\end{proof}

\begin{lemma}[affine optimization over the reduced edge graph reduces to linear programming]
\label{lem:sec5-edge-lp}
Assume the nondegenerate case $\underline\tau_e<\overline\tau_e$, and let the reduced edge hull be
\[
P_e:=\operatorname{conv}(\mathcal S_e).
\]
Then, for every reduced affine objective $\widetilde\ell_e$,
\begin{equation}
\min\{\widetilde\ell_e(\xi)\mid \xi\in \mathcal S_e\}
=
\min\{\widetilde\ell_e(\xi)\mid \xi\in P_e\}
\label{eq:sec5-edge-lp-hull}
\end{equation}
holds. Moreover, using the linear extended formulation of Corollary~\ref{cor:sec5-linear-ef}, the right-hand side can be solved as a linear program in lifted variables.
\end{lemma}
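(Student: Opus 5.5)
The identity \eqref{eq:sec5-edge-lp-hull} is a general fact about affine functions, and I will prove it without using the structure of $\mathcal S_e$ beyond compactness. Since $\mathcal S_e\subset P_e$, the minimum over $P_e$ is at most the minimum over $\mathcal S_e$. For the reverse inequality, take any $\xi\in P_e$. By Carath\'eodory's theorem, $\xi=\sum_k\mu_k\xi^k$ is a finite convex combination with $\xi^k\in\mathcal S_e$. Because $\widetilde\ell_e$ is affine,
\[
\widetilde\ell_e(\xi)=\sum_k\mu_k\widetilde\ell_e(\xi^k)\ge\min_k\widetilde\ell_e(\xi^k)\ge\inf_{\mathcal S_e}\widetilde\ell_e .
\]
Hence the two infima coincide.

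It remains to justify writing ``min'' on both sides. The set $\mathcal S_e$ is compact. It is closed, being cut out by continuous equalities and inequalities: $x_h=\relu(z_h)$, $L_h(t)\le z_h\le U_h(t)$, and $t\in[-1,1]$. It is bounded because $t$ is bounded, hence so are $z_h$, and then so are $x_h$. The convex hull of a compact set in finite dimension is compact, so $P_e$ is compact too. A continuous affine function therefore attains its minimum on both sets.

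For the linear-programming claim, I will use Corollary~\ref{cor:sec5-linear-ef} with Theorem~\ref{thm:sec5-edge-exact}. Together they show that $P_e$ is the projection onto $\xi=(t,z_i,x_i,z_j,x_j)$ of the lifted polyhedron defined by \eqref{eq:sec5-ef-lambda}--\eqref{eq:sec5-ef-triangle}. In this polyhedron, each scaled membership $(\hat z_h^\sigma,\hat x_h^\sigma)\in\lambda_\sigma C_h^\sigma$ is written explicitly, via Lemma~\ref{lem:sec5-endpoint-triangle-hrep}, as
\[
\hat x\ge0,\qquad \hat x-\hat z\ge0,\qquad (U_h^\sigma-L_h^\sigma)\,\hat x\le U_h^\sigma(\hat z-\lambda_\sigma L_h^\sigma).
\]
This is linear because $U_h^\sigma-L_h^\sigma>0$. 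Substituting the gluing equations makes $\widetilde\ell_e(\xi)$ an affine function of the lifted variables. Minimizing it over the lifted polyhedron is thus a linear program. Its optimal value equals the minimum over $P_e$, because projection preserves the value of an objective that depends only on the projected coordinates.

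The only point needing care is the case $\lambda_\sigma=0$. The inequality description must then force $(\hat z,\hat x)=(0,0)$. Here $\hat x\ge0$ and $\hat x\ge\hat z$ combine with $(U-L)\hat x\le U\hat z$ to give $\hat z\ge0$, and that together with the first two gives $\hat x\ge\hat z\ge0$. From this, $(U-L)\hat x\le U\hat z\le U\hat x$ forces $-L\hat x\le0$. Since $L<0$, this means $\hat x=0$, and then $\hat z=0$. This is where \eqref{eq:sec5-generic-unstable} is used, and I expect it to be the only nonroutine step.
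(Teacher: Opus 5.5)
Your proof is correct and follows the paper's argument: the identity comes from the fact that an affine function evaluated at a convex combination of points of $\mathcal S_e$ equals the same convex combination of its values, and the LP claim comes from the extended formulation of Corollary~\ref{cor:sec5-linear-ef}. You also supply two details the paper leaves implicit, namely compactness of $\mathcal S_e$ and $P_e$ (so both minima are attained) and the explicit check that the homogenized triangle inequalities force $(\hat z,\hat x)=(0,0)$ when $\lambda_\sigma=0$.
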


\begin{proof}
Since $\widetilde\ell_e$ is affine, its value at a convex combination of points in $\mathcal S_e$ is the same convex combination of the corresponding values. Hence, no point in $\operatorname{conv}(\mathcal S_e)$ can have a smaller value than the minimum over $\mathcal S_e$, and the reverse inequality is immediate from $\mathcal S_e\subset\operatorname{conv}(\mathcal S_e)$. Therefore,
\[
\min\{\widetilde\ell_e(\xi)\mid \xi\in \mathcal S_e\}
=
\min\{\widetilde\ell_e(\xi)\mid \xi\in \operatorname{conv}(\mathcal S_e)\}.
\]
Since $P_e=\operatorname{conv}(\mathcal S_e)$, this proves \eqref{eq:sec5-edge-lp-hull}. The second claim follows from Corollary~\ref{cor:sec5-linear-ef}, which provides a linear extended formulation of $P_e$.
\end{proof}

\begin{lemma}[Farkas representation of the optimality gap on the reduced edge hull]
\label{lem:sec5-edge-farkas}
Assume the nondegenerate case $\underline\tau_e<\overline\tau_e$. For each $h\in\{i,j\}$, let
\[
P_h:=\operatorname{conv}(\mathcal S_h).
\]
Then, for any reduced affine objective $\widetilde\ell_e$, if its reduced optimal value is
\[
\widetilde\gamma_e^\star:=\min\{\widetilde\ell_e(\xi)\mid \xi\in P_e\},
\]
there exist finitely many affine inequalities $\varphi_{i,1},\dots,\varphi_{i,N_i}$ valid on $P_i$, affine inequalities $\varphi_{j,1},\dots,\varphi_{j,N_j}$ valid on $P_j$, and nonnegative coefficients $\lambda_{i,1},\dots,\lambda_{i,N_i},\lambda_{j,1},\dots,\lambda_{j,N_j}$ such that
\begin{equation}
\widetilde\ell_e-\widetilde\gamma_e^\star
=
\sum_{r=1}^{N_i}\lambda_{i,r}\,\varphi_{i,r}(t,z_i,x_i)
+
\sum_{s=1}^{N_j}\lambda_{j,s}\,\varphi_{j,s}(t,z_j,x_j)
\label{eq:sec5-edge-farkas}
\end{equation}
holds on the reduced edge variables.
\end{lemma}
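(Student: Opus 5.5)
We use the linear extended formulation of Corollary~\ref{cor:sec5-linear-ef} together with LP duality, and the key fact that the two units interact only through the shared scalar $t$.

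\textbf{Step 1 (reduction to a problem in $t$).} By Theorem~\ref{thm:sec5-edge-exact}, $P_e=G_e$. Hence $\xi=(t,z_i,z_j,x_i,x_j)\in P_e$ if and only if $(t,z_h,x_h)\in P_h$ for $h=i,j$. Write $\widetilde\ell_e=c_0+c_t t+\ell_i(z_i,x_i)+\ell_j(z_j,x_j)$ and put the $t$-term with unit $i$. Define
\[
\mu_i(t):=\min\{c_0+c_tt+\ell_i(z_i,x_i)\mid (t,z_i,x_i)\in P_i\},\qquad
\mu_j(t):=\min\{\ell_j(z_j,x_j)\mid (t,z_j,x_j)\in P_j\}.
\]
Since $G_e$ is a fibre product over $t$, we get $\widetilde\gamma_e^\star=\min_{t\in[-1,1]}(\mu_i(t)+\mu_j(t))$. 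Each $P_h$ is a polytope (a convex hull of two triangles) and each objective is linear, so $\mu_h$ is a convex piecewise-affine function on $[-1,1]$.

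\textbf{Step 2 (splitting the shared scalar).} Let $t^\star$ be a minimizer of $\mu_i+\mu_j$ over $[-1,1]$. By convex optimality (a subgradient sum rule), there are $g_i\in\partial\mu_i(t^\star)$ and $g_j\in\partial\mu_j(t^\star)$ such that $g_i+g_j$ lies in the normal cone of $[-1,1]$ at $t^\star$. Set $\kappa:=g_i+g_j$, with $\kappa\ge0$ if $t^\star=-1$, $\kappa\le0$ if $t^\star=1$, and $\kappa=0$ in the interior.

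We then get the following bounds on $P_i$ and $P_j$:
\[
c_0+c_tt+\ell_i\ge \mu_i(t^\star)+g_i(t-t^\star),\qquad \ell_j\ge \mu_j(t^\star)+g_j(t-t^\star).
\]
Adding them gives
\[
\widetilde\ell_e-\widetilde\gamma_e^\star=\varphi_i+\varphi_j+\kappa(t-t^\star).
\]
Here $\varphi_i:=c_0+c_tt+\ell_i-\mu_i(t^\star)-g_i(t-t^\star)$ is affine and valid on $P_i$, and $\varphi_j$ is defined analogously and is valid on $P_j$.

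\textbf{Step 3 (absorbing the boundary term).} The leftover term $\kappa(t-t^\star)$ equals $|\kappa|(1+t)$ or $|\kappa|(1-t)$, or is zero. Both $1\pm t$ are affine and valid on $P_i$, so we add this term to $\varphi_i$. This yields \eqref{eq:sec5-edge-farkas} with $N_i=N_j=1$ and coefficients equal to $1$.

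\textbf{Alternative route via Farkas.} One can instead write the LP over the extended formulation, take its optimal dual multipliers, and group them by unit, with the $t$-equations and $\lambda_\pm$-equations split between the two groups. The subgradient argument above is a cleaner version of this.

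\textbf{Main obstacle.} The delicate point is Step 2: the sum rule and attainment must hold exactly. Attainment follows from compactness of $P_h$. The existence of compatible subgradients follows because all functions are polyhedral, so the sum rule holds without any constraint qualification. We also need each $\mu_h$ to be finite on all of $[-1,1]$, which holds because every $t$-slice of $P_h$ is nonempty by Lemma~\ref{lem:sec5-oneunit-endpoint}.
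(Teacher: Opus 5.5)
Your proof is correct, but it takes a different route from the paper's.

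\textbf{The paper's route.} It takes finite H-descriptions $\{\varphi_{h,r}\ge 0\}$ of the polytopes $P_i$ and $P_j$. By Theorem~\ref{thm:sec5-edge-exact}, stacking these rows (embedded through the coordinate projections $M_i,M_j$) gives an H-description of $P_e$. A single application of the affine Farkas lemma, i.e.\ LP duality, then finishes the proof. The grouping by unit is automatic, because every row involves only $(t,z_i,x_i)$ or only $(t,z_j,x_j)$.

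\textbf{Your route.} You use the fibre-product structure through the parametric value functions $\mu_h(t)$. You split the coupling through the shared scalar with a one-dimensional multiplier, namely compatible subgradients $g_i,g_j$ at $t^\star$. This gives an explicit single valid inequality per unit, built from $\mu_h(t^\star)$ and $g_h$. It also shows transparently why a scalar ``price'' on $t$ is all the coupling needed.

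\textbf{Trade-off.} Your argument is more constructive and interpretable. It relies on convex-analysis facts in place of one Farkas application: convexity and piecewise-affinity of the value functions, nonempty subdifferentials (finite one-sided derivatives) at the endpoints, and the polyhedral sum rule. Your conclusion $N_i=N_j=1$ is not stronger than the paper's. Any nonnegative combination of affine inequalities valid on $P_h$ is again one, so the paper's representation aggregates to the same form.

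\textbf{A small slip.} With the usual convention, the optimality condition $0\in\partial(\mu_i+\mu_j)(t^\star)+N_{[-1,1]}(t^\star)$ puts $-(g_i+g_j)$, not $g_i+g_j$, in the normal cone. The signs you actually use ($\kappa\ge 0$ at $t^\star=-1$, $\kappa\le 0$ at $t^\star=1$) are nevertheless the right ones. If you regard $\mu_h$ as $+\infty$ outside $[-1,1]$, then $\partial\mu_h(-1)=(-\infty,\mu_{h,+}'(-1)]$, and similarly at $t=1$. The sum rule then lets you choose $g_i+g_j=0$ directly, which makes your Step~3 unnecessary.
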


\begin{proof}
By Theorem~\ref{thm:sec5-edge-exact}, the reduced edge hull can be written as
\[
P_e
=
\{(t,z_i,z_j,x_i,x_j)\mid (t,z_i,x_i)\in P_i,\ (t,z_j,x_j)\in P_j\}.
\]
Moreover, by Lemma~\ref{lem:sec5-oneunit-endpoint} and Corollary~\ref{cor:sec5-linear-ef}, each $P_h$ is a polytope. Hence, we can write
\[
P_h=\{(t,z_h,x_h)\mid \varphi_{h,r}(t,z_h,x_h)\ge 0\ \text{for }r=1,\dots,N_h\}
\]
using finitely many valid affine inequalities.

Let $M_i(t,z_i,x_i,z_j,x_j)=(t,z_i,x_i)$ and $M_j(t,z_i,x_i,z_j,x_j)=(t,z_j,x_j)$. Then $P_e$ admits the polyhedral description
\[
P_e=
\{\xi\mid \varphi_{i,r}(M_i\xi)\ge 0\ (r=1,\dots,N_i),\ \varphi_{j,s}(M_j\xi)\ge 0\ (s=1,\dots,N_j)\}.
\]
Each row is simply an affine inequality valid on $P_i$ or $P_j$, embedded into the reduced edge variables.

Now, $\widetilde\gamma_e^\star$ is the minimum of the affine function $\widetilde\ell_e$ over the polytope $P_e$. Therefore, by the polyhedral form of Farkas' lemma, there exist nonnegative vectors $\lambda_i,\lambda_j$ such that
\[
\widetilde\ell_e-\widetilde\gamma_e^\star
=
\sum_{r=1}^{N_i}\lambda_{i,r}\,\varphi_{i,r}(M_i\xi)
+
\sum_{s=1}^{N_j}\lambda_{j,s}\,\varphi_{j,s}(M_j\xi).
\]
Renaming the rows as $\varphi_{i,r}$ and $\varphi_{j,s}$ yields \eqref{eq:sec5-edge-farkas}.
\end{proof}

Before applying the reduced edge hull description to the original edge block, we specify
the class of affine objectives covered by the reduced-hull argument.
Let $\ell_e$ be an affine edge-local objective whose dependence on the original input variables is given by
\[
d_i^\top u_{T_i}+d_e^\top u_{T_e}+d_j^\top u_{T_j},
\]
where $T_i$, $T_e$, and $T_j$ denote, respectively, the private input set of unit $i$, the shared input set, and the private input set of unit $j$. We say that $\ell_e$ is \emph{expressible in the rank-one reduced variables} if
\begin{equation}
d_i\in \operatorname{span}\{\bar W_{i,T_i}^{\top}\},
\qquad
d_j\in \operatorname{span}\{\bar W_{j,T_j}^{\top}\},
\qquad
d_e\in \operatorname{span}\{r_e\}.
\label{eq:sec5-objective-compatible}
\end{equation}
We call \eqref{eq:sec5-objective-compatible} the reduced-objective condition. Under this condition,
the input-variable dependence of $\ell_e$ is affine in the reduced scalars $p_i$, $p_j$, and $t_0=r_e^\top u_{T_e}$.
After normalizing $t_0$ and using the substitution $p_h=z_h-\alpha_h t$, the objective restricted to the feasible edge block
can be expressed as a reduced affine objective, 
\[
\ell_e(\zeta_e)=\widetilde\ell_e(t,z_i,x_i,z_j,x_j).
\]

\begin{theorem}[edge-local tightness for a regular rank-one edge at relaxation order $2$]
\label{thm:sec5-edge-degree2}
Assume that the edge $e=\{i,j\}$ is a regular rank-one edge in the sense of Section~\ref{sec:graph}. Let $\ell_e$ be an affine objective on the edge block satisfying the reduced-objective condition \eqref{eq:sec5-objective-compatible}, and let the local optimal value be denoted by
\[
\gamma_e^\star:=\min\{\ell_e(\zeta_e)\mid \zeta_e\in K_e\}.
\]
Then,
\begin{equation}
\ell_e-\gamma_e^\star
\in
\qmod_2(g_e)+\ideal_2(h_e)
\label{eq:sec5-edge-degree2}
\end{equation}
holds.
\end{theorem}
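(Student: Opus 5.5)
The plan is to split according to Lemma~\ref{lem:sec5-regularity-endpoints}. In the degenerate case (a), the shared scalar $t_0$ is constant on the box. The edge block is then a direct product of two one-unit blocks whose inputs are $u_{T_i}$ and $u_{T_j}$, possibly together with the unused shared coordinates $u_{T_e}$. Under \eqref{eq:sec5-objective-compatible}, $\ell_e$ splits as $\ell_i+\ell_j+c_e r_e^\top u_{T_e}$, and the last term is constant on the feasible set.

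We would apply the rectangular-input one-unit result \textbf{(A3)} separately to $\ell_i$ and $\ell_j$. Each yields $\ell_h-\gamma_h^\star\in\qmod_2(g_h)+\ideal_2(h_h)$. The shared term $c_e r_e^\top u_{T_e}-c_e\underline\tau_e$ is an affine function vanishing on the box $\mathcal U_{T_e}$, so Lemma~\ref{lem:sec5-box-affine-certificate}, applied to it and to its negative, gives it a box certificate. Summing the three pieces and using $\gamma_e^\star=\gamma_i^\star+\gamma_j^\star+c_e\underline\tau_e$ gives \eqref{eq:sec5-edge-degree2}. The same variable-extension reasoning as in Lemma~\ref{lem:sec5-variable-extension} shows that all pieces live in the edge block's module.

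In the nondegenerate case (b), we normalize as in \eqref{eq:sec5-normal-form}--\eqref{eq:sec5-tbox} so that \eqref{eq:sec5-generic-unstable} holds. Using \eqref{eq:sec5-objective-compatible} and the substitution $p_h=z_h-\alpha_h t$, we write $\ell_e=\Pi_e(\widetilde\ell_e)$ for a reduced affine objective $\widetilde\ell_e$ in $\xi=(t,z_i,x_i,z_j,x_j)$.

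We first check that $\gamma_e^\star=\widetilde\gamma_e^\star$. The affine map $\pi_e$ sends $K_e$ onto $\mathcal S_e$: surjectivity holds because every $(t,p_h)$ in $[-1,1]\times[A_h,B_h]$ is realized by a choice of $u_{T_e}$, $u_{T_i}$, $u_{T_j}$, since the private coordinates are disjoint. Lemma~\ref{lem:sec5-edge-lp} then identifies the minimum over $\mathcal S_e$ with the minimum over $P_e$.

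Next, Lemma~\ref{lem:sec5-edge-farkas} writes $\widetilde\ell_e-\widetilde\gamma_e^\star$ as a nonnegative combination of affine inequalities $\varphi_{i,r}$, valid on $P_i$, and $\varphi_{j,s}$, valid on $P_j$. For each such inequality, Lemma~\ref{lem:sec5-reduced-azuma} gives $\varphi_{h,r}\in\qmod_2(g_h^{\mathrm{red}})+\ideal_2(h_h^{\mathrm{red}})$. Lemma~\ref{lem:sec5-variable-extension} lifts each of these into $\qmod_2(g_e^{\mathrm{red}})+\ideal_2(h_e^{\mathrm{red}})$. This set is a convex cone, so the nonnegative combination stays in it, and therefore $\widetilde\ell_e-\widetilde\gamma_e^\star\in\qmod_2(g_e^{\mathrm{red}})+\ideal_2(h_e^{\mathrm{red}})$.

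Finally, we apply Lemma~\ref{lem:sec5-affine-elimination}(ii). It gives $\Pi_e(\widetilde\ell_e-\widetilde\gamma_e^\star)=\ell_e-\gamma_e^\star\in\qmod_2(g_e)+\ideal_2(h_e)$. The only bookkeeping is that $\Pi_e$ maps $z_h$ to $v_h$ modulo the linear equalities in $h_e$, and these equalities contribute degree-one ideal terms, which are harmless at order $2$.

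The main obstacle we expect is the identity $\ell_e=\Pi_e(\widetilde\ell_e)$ holding as polynomials, as opposed to only on $K_e$. The difference between the two is a linear combination of the affine equalities $v_h-\bar W_{h,\cdot}u-\bar b_h$, so it lies in $\ideal_2(h_e)$. We would verify this explicitly, tracking the constant absorbed during normalization. We would also confirm that the components of $u_{T_e}$ orthogonal to $r_e$, which $\pi_e$ forgets, do not appear in $\ell_e$; this is exactly what $d_e\in\operatorname{span}\{r_e\}$ guarantees.

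A second delicate point is the boundary situation where some $L_h^\sigma$ or $U_h^\sigma$ equals $0$, so that a triangle degenerates. We would treat it by observing that the Farkas argument uses only the polytope structure of $P_h$ and that Lemma~\ref{lem:sec5-reduced-azuma} does not require strict inequalities.
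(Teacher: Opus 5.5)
Your proposal is correct and follows the paper's proof essentially step for step. The degenerate case uses one-unit certificates; the nondegenerate case uses $\widetilde\gamma_e^\star=\gamma_e^\star$, Farkas on the product hull (Lemma~\ref{lem:sec5-edge-farkas}), Lemma~\ref{lem:sec5-reduced-azuma}, variable extension, and the pullback of Lemma~\ref{lem:sec5-affine-elimination}(ii). Your extra bookkeeping makes explicit what the paper leaves implicit: that $\ell_e=\Pi_e(\widetilde\ell_e)$ holds only modulo the linear equalities in $h_e$, and that the constant shared term must be handled in the degenerate case (keep $u_{T_e}$ in both one-unit blocks so their linear equalities appear verbatim in $h_e$); your boundary-triangle concern is vacuous since \eqref{eq:M3b-sec3} is strict and forces $L_h^\sigma<0<U_h^\sigma$.
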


\begin{proof}
We first consider
the degenerate case $\underline\tau_e=\overline\tau_e$. By Lemma~\ref{lem:sec5-regularity-endpoints},
the edge block then reduces to the direct product of two one-unit blocks. Applying the one-unit tightness result of Azuma et al.~\cite{AKY2026}
 to each block and summing the resulting certificates yields
an order-$2$ certificate for this direct-product block. Pulling this certificate back through
the affine elimination in Lemma~\ref{lem:sec5-affine-elimination}(ii), we obtain
\[
\ell_e-\gamma_e^\star\in \qmod_2(g_e)+\ideal_2(h_e).
\]
We now turn to the nondegenerate case $\underline\tau_e<\overline\tau_e$.

Using the affine elimination introduced at the beginning of this section, let $\pi_e$ map the original edge variables $\zeta_e$ to
\[
\xi:=(t,z_i,x_i,z_j,x_j).
\]
By the reduced-objective condition, there exists a reduced affine objective $\widetilde\ell_e$ such that, on the feasible edge block,
\[
\ell_e(\zeta_e)=\widetilde\ell_e(\pi_e(\zeta_e)).
\]
Set 
\[
\widetilde\gamma_e^\star:=\min\{\widetilde\ell_e(\xi)\mid \xi\in P_e\}.
\]
Moreover, the image of the original feasible edge block under $\pi_e$ is the reduced graph $\mathcal S_e$,
and $P_e=\operatorname{conv}(\mathcal S_e)$. Since $\widetilde\ell_e$ is affine,
its minimum over $\mathcal S_e$ coincides with its minimum over $P_e$. Hence,
\[
\widetilde\gamma_e^\star=\gamma_e^\star.
\]

Next, apply Lemma~\ref{lem:sec5-edge-farkas} to $\widetilde\ell_e$. Then, there exist affine inequalities $\varphi_{i,r}$ valid on $P_i$,
affine inequalities $\varphi_{j,s}$ valid on $P_j$, and nonnegative coefficients $\lambda_{i,r},\lambda_{j,s}$ such that
\[
\widetilde\ell_e-\widetilde\gamma_e^\star
=
\sum_r \lambda_{i,r}\,\varphi_{i,r}(t,z_i,x_i)
+
\sum_s \lambda_{j,s}\,\varphi_{j,s}(t,z_j,x_j).
\]
Each $\varphi_{i,r}$ and $\varphi_{j,s}$ is valid on the corresponding reduced one-unit hull.
Thus, by Lemma~\ref{lem:sec5-reduced-azuma}, we have
\[
\varphi_{i,r}\in \qmod_2(g_i^{\mathrm{red}})+\ideal_2(h_i^{\mathrm{red}}),
\qquad
\varphi_{j,s}\in \qmod_2(g_j^{\mathrm{red}})+\ideal_2(h_j^{\mathrm{red}}).
\]
Applying Lemma~\ref{lem:sec5-variable-extension} to each term, we may regard these terms as 
elements of
\[
\qmod_2(g_e^{\mathrm{red}})+\ideal_2(h_e^{\mathrm{red}}).
\]
Since the coefficients are nonnegative, their sum belongs to the same truncated quadratic module plus ideal. Therefore,
\[
\widetilde\ell_e-\widetilde\gamma_e^\star
\in
\qmod_2(g_e^{\mathrm{red}})+\ideal_2(h_e^{\mathrm{red}}).
\]
Finally, applying Lemma~\ref{lem:sec5-affine-elimination}(ii) to $\widetilde\psi=\widetilde\ell_e-\widetilde\gamma_e^\star$ gives a pullback
certificate satisfying
\[
\ell_e-\gamma_e^\star
\in
\qmod_2(g_e)+\ideal_2(h_e),
\]
which is exactly \eqref{eq:sec5-edge-degree2}.
\end{proof}

Thus, a regular rank-one edge is handled by transferring the one-unit result of Azuma et al.~\cite{AKY2026}
through the reduced hull and the affine pullback.

\subsection{Main theorem for matching}

We use the notation of Section~\ref{sec:reduction} and write 
\[
\mathcal I=\{j\in\Uset\mid \deg_H(j)=0\}
\]
for the set of isolated vertices.
The final global theorem is proved under the following three structural assumptions.
\begin{enumerate}[label=\textbf{(M\arabic*)},leftmargin=24mm]
\item Each isolated vertex block $(g_j,h_j)$, $j\in\mathcal I$, satisfies at least one of the one-unit conditions of Azuma et al.~\cite{AKY2026}.
\item The input-sharing graph $H=(\Uset,E)$ is a matching.
\item Every edge $e\in E$ is a regular rank-one edge as defined in Section~\ref{sec:graph}.
\end{enumerate}

\begin{theorem}[main theorem for matching]
\label{thm:sec5-matching-main}
Assume that \textbf{(M1)}--\textbf{(M3)} hold. Suppose further that, for the given output objective,
every edge-local affine objective $\Theta_e^{\mathrm{lin}}$ defined in \eqref{eq:sec4-component-edge}
satisfies the reduced-objective condition \eqref{eq:sec5-objective-compatible}.
Then, the order-$2$ block-sparse SOS relaxation induced by the connected components of the matching is tight.
In particular,
\begin{equation}
\gamma_{\mathrm{sp}}^{(2)}=\gamma^*.
\label{eq:sec5-matching-main}
\end{equation}
\end{theorem}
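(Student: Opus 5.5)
The plan is to verify the two hypotheses of the global tightness criterion, Proposition~\ref{prop:sec4-global-exactness}: the vertex certificates \eqref{eq:sec4-local-cert-vertex} and the edge certificates \eqref{eq:sec4-local-cert-edge}. By \textbf{(M2)} and Proposition~\ref{prop:sec4-components-matching}, every connected component is either an isolated vertex or a single edge. The value decomposition of Proposition~\ref{prop:sec4-value-decomposition} then holds, and the criterion applies directly.

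\emph{Isolated vertices.} Fix $j\in\mathcal I$. The block $(g_j,h_j)$ is a one-unit ReLU problem: the input $u_{T_j}$ ranges over a box, and the pre-activation $v_j=\bar W_{j,T_j}u_{T_j}+\bar b_j$ is affine. The objective $\Theta_j^{\mathrm{lin}}$ is affine in $(u_{T_j},x_j)$. By \textbf{(M1)}, this is exactly setting \textbf{(A3)}, so the one-unit result of Azuma et al.\ gives $\Theta_j^{\mathrm{lin}}-m_j\in\qmod_2(g_j)+\ideal_2(h_j)$.

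Two small issues need attention here. First, if $j$ is the base component $C^\circ$ that absorbed $T_0$, the extra term $-\sum_{k\in T_0}\delta_k u_k$ is separable from everything else and affine on a box. I would split off its minimum and certify that part with Lemma~\ref{lem:sec5-box-affine-certificate}; adding the two certificates keeps the order at $2$. Second, if $\Uset=\emptyset$, the whole reduced problem is linear over the box, and the same lemma certifies it directly.

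\emph{Edges.} Fix $e\in E$. By \textbf{(M3)}, $e$ is a regular rank-one edge, and by hypothesis $\Theta_e^{\mathrm{lin}}$ satisfies the reduced-objective condition \eqref{eq:sec5-objective-compatible}. Theorem~\ref{thm:sec5-edge-degree2} applied with $\ell_e=\Theta_e^{\mathrm{lin}}$ then gives $\gamma_e^\star=m_e$ and $\Theta_e^{\mathrm{lin}}-m_e\in\qmod_2(g_e)+\ideal_2(h_e)$. If the base component absorbing $T_0$ is an edge, I would handle it as in the vertex case. Write the enlarged objective as $\Theta_e^{\mathrm{lin}}+\theta_0(u_{T_0})$. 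Its minimum over the enlarged block is the sum of the two separate minima, because $u_{T_0}$ is decoupled from the edge constraints. So the sum of the edge certificate and the box certificate for $\theta_0-\min\theta_0$ is a certificate for the enlarged block. Both pieces are sums over subsets of the enlarged generators, so the order stays at $2$.

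With \eqref{eq:sec4-local-cert-vertex} and \eqref{eq:sec4-local-cert-edge} in place, Proposition~\ref{prop:sec4-global-exactness} yields $\gamma_{\mathrm{sp}}^{(2)}=\gamma^*$.

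The main obstacle is a bookkeeping gap about which variables the edge objective depends on. The reduced-objective condition refers to $u_{T_i},u_{T_e},u_{T_j}$, while Section~\ref{sec:edge} phrases things in terms of $S_e$ and $S_h\setminus S_e$. Under the matching assumption these agree: the shared support $S_i\cap S_j$ equals $T_e$, and $S_h\setminus S_e=T_h$, since no third unit touches these coordinates. I would verify this identification explicitly so that Theorem~\ref{thm:sec5-edge-degree2} applies verbatim. The degenerate edge case is already covered inside Theorem~\ref{thm:sec5-edge-degree2}.
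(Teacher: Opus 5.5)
Your proposal is correct and follows the paper's proof: you certify each isolated vertex via \textbf{(M1)} and the one-unit result, certify each edge via Theorem~\ref{thm:sec5-edge-degree2} with $\ell_e=\Theta_e^{\mathrm{lin}}$, and then invoke Proposition~\ref{prop:sec4-global-exactness}. Your extra bookkeeping is sound and fills in details the paper leaves implicit: you split off the absorbed $T_0$ term and certify it with Lemma~\ref{lem:sec5-box-affine-certificate}, and you check that $T_e=S_e$ and $T_h=S_h\setminus S_e$ under the matching assumption.
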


\begin{proof}
First, let $j\in\mathcal I$ be any isolated vertex. By assumption \textbf{(M1)}, the one-unit tightness result of Azuma et al.~\cite{AKY2026}
from Section~\ref{sec:prelim} applies to the local problem $(g_j,h_j)$. Therefore, for the local affine objective $\Theta_j^{\mathrm{lin}}$ defined in Section~\ref{sec:reduction}, we have
\[
\Theta_j^{\mathrm{lin}}-m_j
\in
\qmod_2(g_j)+\ideal_2(h_j),
\]
which is exactly \eqref{eq:sec4-local-cert-vertex}.

Next, for any edge $e\in E$, assumption \textbf{(M3)} implies that $e$ is a regular rank-one edge.
The reduced-objective condition in the theorem statement ensures that
Theorem~\ref{thm:sec5-edge-degree2} applies to the local affine objective $\ell_e=\Theta_e^{\mathrm{lin}}$. Therefore,
\[
\Theta_e^{\mathrm{lin}}-m_e
\in
\qmod_2(g_e)+\ideal_2(h_e),
\]
which is \eqref{eq:sec4-local-cert-edge}.

Finally, by assumption \textbf{(M2)}, the graph $H$ is a matching. Hence Proposition~\ref{prop:sec4-global-exactness}
 from Section~\ref{sec:reduction} applies to the component decomposition.
 Summing the local certificates established above for the isolated vertices and single-edge components
 yields a global certificate. Therefore
\[
\gamma_{\mathrm{sp}}^{(2)}=\gamma^*.
\]
\end{proof}

\begin{remark}
The reduced-objective condition \eqref{eq:sec5-objective-compatible} is objective-dependent and is not an additional graph assumption.
With \textbf{(M1)}--\textbf{(M3)}, it is directly checkable from the reduced data and the input box. Extending Theorem~\ref{thm:sec5-matching-main}
 to objectives outside this class would require retaining the input directions eliminated by the rank-one reduction.
\end{remark}

%% file: section-06.tex
\section{Conclusion and Future Work}\label{sec:conclusion}

We have established tightness at relaxation order $2$ for a block-sparse SOS relaxation of one-layer ReLU verification under the matching, regular rank-one, and reduced-objective conditions. The matching structure decomposes the problem into isolated-vertex and single-edge blocks. Existing one-unit results handle the former, while the endpoint-slice description developed here yields local certificates for the latter. Summing these certificates provides the global tightness result.

Extending the analysis to general forests remains open since vertices of degree at least $2$ introduce compatibility conditions that must be represented by low-degree sparse SOS certificates. Other directions include edges with shared rank $q_e\ge 2$, graphs with cycles, and higher-degree certificates for structures beyond rank-one regularity.

%% file: section-0a.tex
\section{A Concrete Example Illustrating the Theory of Sections 3--5}\label{app:worked-example}

In this appendix, we present a small concrete instance that illustrates the framework developed in Sections~\ref{sec:graph}--\ref{sec:edge}. We first show how the input-sharing graph, the matching structure, and the regular rank-one edge condition are obtained from the input data $(W,b,\mathcal U)$. We then compute the local optimal values $m_e$ and $m_3$ through the component decomposition and derive the optimal value $\gamma^*=13/2$ of the original ReLU verification problem. Finally, we give a reduced certificate of relaxation order $2$ for the edge block and confirm tightness of the block-sparse SOS relaxation for this instance.

Appendix~A.1 describes the original verification problem and the reduced data. Appendix~A.2 constructs the input-sharing graph and verifies that its unique edge is a regular rank-one edge. Appendix~A.3 computes the optimal value $\gamma^*=13/2$ from the component decomposition. Appendix~A.4 introduces the reduced polynomial optimization problem for the edge block and presents a relaxation-order-$2$ certificate.

\subsection{The original ReLU verification problem and the reduced data}\label{app:subsec:data}

Let the input dimension be $n_0=4$, the number of hidden units be $n_1=4$, and the output dimension be $n_2=1$. Let the input set be
\begin{equation}
\mathcal U=[-1,1]^4.
\label{eq:app-U}
\end{equation}
We define the weight matrix and bias by
\begin{equation}
W=
\begin{pmatrix}
\frac12 & 1 & 0 & 0\\
-\frac12 & 0 & 1 & 0\\
0 & 0 & 0 & 1\\
0 & 1 & 0 & 0
\end{pmatrix},
\qquad
b=
\begin{pmatrix}
0\\0\\0\\2
\end{pmatrix},
\label{eq:app-Wb}
\end{equation}
and the output map by
\begin{equation}
A=
\begin{pmatrix}
1 & 2 & -1 & 1
\end{pmatrix},
\qquad a=0.
\label{eq:app-Aa}
\end{equation}
Let the linear specification vector be $c=1$. Then the original ReLU verification problem is
\begin{equation}
\gamma^*=
\max\{Ax \mid u\in\mathcal U,\ v=Wu+b,\ x=\relu(v)\}.
\label{eq:app-verification}
\end{equation}

The pre-activations are given by
\begin{align}
v_1&=\tfrac12 u_1+u_2,
&
v_2&=-\tfrac12 u_1+u_3,
&
v_3&=u_4,
&
v_4&=u_2+2.
\label{eq:app-vs}
\end{align}
Therefore, from \eqref{eq:app-U} one immediately obtains
\begin{align}
v_1&\in[-\tfrac32,\tfrac32],
&
v_2&\in[-\tfrac32,\tfrac32],
&
v_3&\in[-1,1],
&
v_4&\in[1,3].
\label{eq:app-v-intervals}
\end{align}
These interval bounds show that units $1,2,3$ are undecided, while unit $4$ is a determined positive unit. Hence,
\begin{equation}
\Uset=\{1,2,3\},
\qquad
\mathcal J^+=\{4\},
\qquad
\mathcal J^- = \emptyset.
\label{eq:app-partition}
\end{equation}

Following the reduced notation of Section~\ref{sec:graph}, the data corresponding to the undecided units are
\begin{equation}
\bar W=
\begin{pmatrix}
\frac12 & 1 & 0 & 0\\
-\frac12 & 0 & 1 & 0\\
0 & 0 & 0 & 1
\end{pmatrix},
\qquad
\bar b=
\begin{pmatrix}
0\\0\\0
\end{pmatrix},
\qquad
\bar A=
\begin{pmatrix}
1 & 2 & -1
\end{pmatrix}.
\label{eq:app-reduced-data}
\end{equation}
On the other hand, the contribution of the determined positive unit $4$ is
\begin{equation}
A_{:,\mathcal J^+}W_{\mathcal J^+,:}=
\begin{pmatrix}0 & 1 & 0 & 0\end{pmatrix},
\qquad
A_{:,\mathcal J^+}b_{\mathcal J^+}=2,
\label{eq:app-pos-unit-contrib}
\end{equation}
and after incorporating this into the objective, the reduced verification problem becomes
\begin{equation}
\gamma^*=
\max\{x_1+2x_2-x_3+u_2+2 \mid u\in[-1,1]^4,\ v=\bar W u,\ x=\relu(v)\}.
\label{eq:app-reduced-verification}
\end{equation}

\subsection{Input-sharing graph, matching structure, and regular rank-one edge}\label{app:subsec:graph}

The input supports of the undecided units are
\begin{equation}
S_1=\{1,2\},
\qquad
S_2=\{1,3\},
\qquad
S_3=\{4\}.
\label{eq:app-supports}
\end{equation}
Hence the edge set of the input-sharing graph $H=(\Uset,E)$ is
\begin{equation}
E=\bigl\{\{1,2\}\bigr\}.
\label{eq:app-edge-set}
\end{equation}
That is, units $1$ and $2$ share the input coordinate $u_1$, whereas unit $3$ is an isolated vertex. Thus the input-sharing graph is a matching, and the global decomposition in the paper appears here in its simplest nontrivial form.

For the edge $e=\{1,2\}$, the shared input set and private sets are
\begin{equation}
S_e=\{1\},
\qquad
P_1=\{2\},
\qquad
P_2=\{3\}.
\label{eq:app-shared-private}
\end{equation}
The shared rank is
\begin{equation}
q_e=
\dim\operatorname{span}\left\{\begin{pmatrix}\frac12\end{pmatrix},\begin{pmatrix}-\frac12\end{pmatrix}\right\}=1,
\label{eq:app-qe}
\end{equation}
so condition \eqref{eq:M3a-sec3} holds.

Furthermore,
\begin{align}
v_1&=u_2+\tfrac12u_1,
&
v_2&=u_3-\tfrac12u_1,
\end{align}
so the lower and upper bounds of the private and shared parts are
\begin{align}
\underline p_{1,e}=-1,
&\qquad \overline p_{1,e}=1,
&
\underline s_{1,e}=-\tfrac12,
&\qquad \overline s_{1,e}=\tfrac12,\\
\underline p_{2,e}=-1,
&\qquad \overline p_{2,e}=1,
&
\underline s_{2,e}=-\tfrac12,
&\qquad \overline s_{2,e}=\tfrac12.
\label{eq:app-ps-bounds}
\end{align}
Therefore the edge-regularity condition \eqref{eq:M3b-sec3} becomes
\begin{equation}
-1+\tfrac12<0<1-\tfrac12,
\label{eq:app-edge-regularity-check}
\end{equation}
which holds for both units. Thus the unique edge $e=\{1,2\}$ is a regular rank-one edge.

\subsection{Component decomposition and the optimal value of the original ReLU problem}\label{app:subsec:decomp}

In this example, the set of isolated vertices is
\begin{equation}
\mathcal I=\{3\}.
\label{eq:app-isolated-set}
\end{equation}
Using the notation of Section~\ref{sec:reduction}, we then have
\begin{equation}
T_1=\{2\},
\qquad
T_e=\{1\},
\qquad
T_2=\{3\},
\qquad
T_3=\{4\},
\qquad
T_0=\emptyset.
\label{eq:app-Tsets}
\end{equation}
Moreover, the coefficients defined in \eqref{eq:sec4-alpha-delta} are
\begin{equation}
\alpha_1=1,
\qquad
\alpha_2=2,
\qquad
\alpha_3=-1,
\qquad
(\delta_1,\delta_2,\delta_3,\delta_4)=(0,-1,0,0).
\label{eq:app-alpha-delta}
\end{equation}
Hence the local affine objectives are given by
\begin{equation}
\Theta_e^{\mathrm{lin}}=-x_1-2x_2-u_2,
\label{eq:app-edge-objective}
\end{equation}
and
\begin{equation}
\Theta_3^{\mathrm{lin}}=x_3.
\label{eq:app-vertex-objective}
\end{equation}
Therefore the gap polynomial decomposes as
\begin{equation}
p_\gamma=\gamma-2+\Theta_e^{\mathrm{lin}}+\Theta_3^{\mathrm{lin}}.
\label{eq:app-pgamma-decomp}
\end{equation}

On the isolated-vertex side, the local optimal value is immediately
\begin{equation}
m_3=0.
\label{eq:app-m3}
\end{equation}
For the edge component, introduce the reduced coordinates
\begin{equation}
t:=u_1,
\qquad
p_1:=u_2,
\qquad
p_2:=u_3,
\qquad
z_1=p_1+\tfrac12 t,
\qquad
z_2=p_2-\tfrac12 t.
\label{eq:app-reduced-coordinates}
\end{equation}
The edge-local objective is expressible in the rank-one reduced variables introduced in Section~\ref{sec:edge}: its only input-linear term is $-u_2=-p_1$, and it contains no input-linear component orthogonal to the reduced shared scalar $t=u_1$.
Then the local objective becomes
\begin{equation}
\Theta_e^{\mathrm{lin}}=-p_1-x_1-2x_2.
\label{eq:app-edge-objective-reduced}
\end{equation}
For fixed $t$, minimizing $\Theta_e^{\mathrm{lin}}$ is equivalent to maximizing $p_1+x_1+2x_2$. Since $x_1=\relu(p_1+\tfrac12 t)$ is nondecreasing in $p_1$ and $x_2=\relu(p_2-\tfrac12 t)$ is nondecreasing in $p_2$, the optimum is attained at $p_1=p_2=1$. In that case,
\begin{equation}
z_1=1+\tfrac12 t,
\qquad
z_2=1-\tfrac12 t,
\label{eq:app-z-at-opt}
\end{equation}
so the quantity to be maximized reduces to
\begin{equation}
1+\tfrac12 t+2\left(1-\tfrac12 t\right)+1
=4-\tfrac12 t.
\label{eq:app-reduced-max}
\end{equation}
Therefore the maximum value is $9/2$ at $t=-1$, and hence
\begin{equation}
m_e=-\frac92.
\label{eq:app-me}
\end{equation}

Consequently,
\begin{equation}
\eta_{\gamma^*}+m_e+m_3=0,
\label{eq:app-balance}
\end{equation}
so the optimal value of the reduced problem is
\begin{equation}
\gamma^*=\frac{13}{2}.
\label{eq:app-gammastar}
\end{equation}
Moreover, in the original ReLU problem \eqref{eq:app-verification}, if we take
\begin{equation}
u^*=(-1,1,1,-1),
\label{eq:app-optu}
\end{equation}
then
\begin{equation}
v=(\tfrac12,\tfrac32,-1,3),
\qquad
x=(\tfrac12,\tfrac32,0,3),
\label{eq:app-optvx}
\end{equation}
and
\begin{equation}
Ax=\frac12+2\cdot\frac32+3=\frac{13}{2}.
\label{eq:app-optvalue-check}
\end{equation}
Thus \eqref{eq:app-optu} gives an explicit optimal solution of the original verification problem.

\subsection{A reduced-edge certificate of relaxation order \texorpdfstring{$2$}{2}}\label{app:subsec:certificate}

We next give an order-$2$ local certificate for the unique edge block $e=\{1,2\}$. In this example, let the variables of the reduced edge block be
\begin{equation}
(t,z_1,x_1,z_2,x_2),
\label{eq:app-reduced-edge-vars}
\end{equation}
and define the target polynomial by
\begin{equation}
\widetilde p_e(t,z_1,x_1,z_2,x_2):=\frac92+\frac12 t-z_1-x_1-2x_2.
\label{eq:app-reduced-pe}
\end{equation}
This is obtained from \eqref{eq:app-edge-objective-reduced} by substituting $p_1=z_1-\tfrac12 t$, and it is precisely the gap polynomial corresponding to the local optimum $m_e=-9/2$.

For this reduced block, the box-type generators are
\begin{align}
g_1&=1+t,
&g_2&=1-t,
\nonumber\\
g_3&=z_1+1-\tfrac12 t,
&g_4&=1+\tfrac12 t-z_1,
\nonumber\\
g_5&=x_1,
&g_6&=x_1-z_1,
\nonumber\\
g_7&=z_2+1+\tfrac12 t,
&g_8&=1-\tfrac12 t-z_2,
\nonumber\\
g_9&=x_2,
&g_{10}&=x_2-z_2,
\label{eq:app-reduced-generators}
\end{align}
and the ideal generators arising from complementarity are
\begin{equation}
h_1=x_1(x_1-z_1),
\qquad
h_2=x_2(x_2-z_2).
\label{eq:app-reduced-ideal}
\end{equation}
Here $g_1,g_2$ encode the shared scalar constraint $t\in[-1,1]$. The generators $g_3,g_4$ encode $z_1\in[-1+\tfrac12 t,1+\tfrac12 t]$, while $g_7,g_8$ encode $z_2\in[-1-\tfrac12 t,1-\tfrac12 t]$. Finally, $g_5,g_6$ and $g_9,g_{10}$ correspond to the linear ReLU inequalities for $x_1=\relu(z_1)$ and $x_2=\relu(z_2)$, respectively.

The desired membership is
\begin{equation}
\widetilde p_e
=
\sigma_0+
\sum_{r=1}^{10}\sigma_r g_r
+
\tau_1 h_1+\tau_2 h_2,
\label{eq:app-reduced-cert-form}
\end{equation}
where
\begin{equation}
\sigma_0\ \text{is SOS with}\ \deg\sigma_0\le 4,
\qquad
\sigma_r\ \text{is SOS with}\ \deg\sigma_r\le 2\ (r=1,\dots,10),
\qquad
\deg\tau_1,\deg\tau_2\le 2.
\label{eq:app-degree-conditions}
\end{equation}
Equivalently, $\sigma_0$ admits a Gram representation on a basis of polynomials of degree at most $2$, while each $\sigma_r$ admits a Gram representation on a basis of polynomials of degree at most $1$. This is the concrete meaning of ``relaxation order $2$'' in the present example.

Solving this SDP with MOSEK yields the termination status \texttt{OPTIMAL}, objective value $0$, and a maximum coefficient residual in the polynomial identity equal to
\begin{equation}
1.49\times 10^{-8}.
\label{eq:app-max-residual}
\end{equation}
Hence \eqref{eq:app-reduced-cert-form} is satisfied numerically to high precision. This confirms that a local certificate of relaxation order $2$ can be constructed for the edge block in this instance.

For the isolated vertex $3$, one has
\begin{equation}
\Theta_3^{\mathrm{lin}}-m_3=x_3,
\label{eq:app-x3-generator}
\end{equation}
and since $x_3$ itself is one of the generators, it is handled trivially at the same degree. Therefore, from the additive decomposition along the matching in \eqref{eq:app-pgamma-decomp} and \eqref{eq:app-gammastar},
\begin{equation}
p_{\gamma^*}=\bigl(\Theta_e^{\mathrm{lin}}-m_e\bigr)+\bigl(\Theta_3^{\mathrm{lin}}-m_3\bigr)
\label{eq:app-global-gap-sum}
\end{equation}
holds, and by adding the certificates of the edge block and the isolated-vertex block, we obtain
\begin{equation}
p_{\gamma^*}\in \mathcal C_{\mathrm{sp}}^{(2)}.
\label{eq:app-global-cert}
\end{equation}

Therefore, this example shows that
\begin{enumerate}[label=(\roman*),leftmargin=2.4em]
\item the input-sharing graph is a matching and its unique edge is a regular rank-one edge,
\item the component decomposition along the matching yields $\gamma^*=13/2$, and
\item the same optimal value is reproduced by the order-$2$ block-sparse SOS relaxation.
\end{enumerate}
This example therefore provides a concrete illustration of the argument developed in Sections~3--5.